\theoremstyle{plain}
\newtheorem{Theorem}{Theorem}
\newtheorem{Corollary}[Theorem]{Corollary}
\newtheorem{Lemma}[Theorem]{Lemma}
\theoremstyle{definition}
\newtheorem{Definition}[Theorem]{Definition}
\newtheorem{Example}[Theorem]{Example}
\theoremstyle{remark}
\newtheorem{Remark}[Theorem]{Remark}
\newcommand{\into}{\hookrightarrow}
\DeclareMathOperator{\Hom}{Hom}
\DeclareMathOperator{\Obj}{Obj}
\DeclareMathOperator{\spa}{span}
\newcommand{\II}{\mathbb{I}}
\newcommand{\KK}{\mathbb{K}}
\newcommand{\ZZ}{\mathbb{Z}}
\newcommand{\cA}{\mathcal{A}}
\newcommand{\cF}{\mathcal{F}}
\newcommand{\cH}{\mathcal{H}}
\newcommand{\cI}{\mathcal{I}}
\newcommand{\cP}{\mathcal{P}}
\newcommand{\cR}{\mathcal{R}}
\newcommand{\cS}{\mathcal{S}}
\newcommand{\cT}{\mathcal{T}}
\DeclareMathOperator{\Id}{Id}
\newcommand{\R}{\KK[x_1, \ldots, x_n]}
\newcommand{\E}{\we(x_1, \ldots, x_n)}
\newcommand{\x}{\mathbf{x}}
\newcommand{\y}{\mathbf{y}}
\newcommand{\W}{\Omega}
\newcommand{\WVD}{\Omega_{V,d}}
\newcommand{\wv}{W \otimes V}
\DeclareMathOperator{\GL}{GL}
\newcommand{\glv}{\GL(V)}
\newcommand{\blank}{ \rule[0.1cm]{0.3cm}{0.1pt}}
\newcommand{\Blank}{ \rule[0.1cm]{0.4cm}{0.1pt}}
\newcommand{\ten}{\blank \otimes V}
\newcommand{\TV}{\cT_V}
\newcommand{\HVD}{\cH_{V,d}}
\newcommand{\Exterior}{\mathchoice{{\textstyle\bigwedge}}%
    {{\bigwedge}}%
    {{\textstyle\wedge}}%
    {{\scriptstyle\wedge}}}
\newcommand{\we}{\Exterior}
\newcommand{\Sl}{\cS_{\lambda}}
\newcommand{\Slp}{\cS_{\lambda'}}
\newcommand{\Sm}{\cS_{\mu}}
\newcommand{\Smp}{\cS_{\mu'}}
\newcommand{\Sn}{\cS_{\nu}}
\newcommand{\Snp}{\cS_{\nu'}}
\newcommand{\la}{\lambda}
\newcommand{\rep}{\mathbf{Rep}_V}
\newcommand{\ve}{ \mathbf{Vec}}
\newcommand{\poly}{\mathbf{Poly}}
\newcommand{\polyd}{\poly_d}
\newcommand{\gp}{\mathbf{GPoly}}
\DeclareMathOperator{\sym}{Sym}
\newcommand {\id}[1][ ]{\mathbf{1} {#1}}
\DeclareMathOperator{\reg}{reg}
\newcommand {\ia}{\cI_{\cA}}
\newcommand*{\allbf}[1]{\ifmmode\bm{#1}\else\textbf{#1}\fi}
\title{Degree bounds for invariant skew polynomials}
\author{Francesca Gandini}
\date{}
\begin{document}

\maketitle
\begin{abstract}
 When we consider the action of a finite group on a polynomial ring, a polynomial unchanged by the action is called an invariant polynomial. A famous result of Noether states that in characteristic zero the maximal degree of a minimal invariant polynomial is bounded above by the order of the group. Our work establishes that the same bound holds for invariant skew polynomials in the exterior algebra. Our approach to the problem relies on a theorem of Derksen that connects invariant theory to the study of ideals of subspace arrangements. We adapt his proof over the polynomial ring to the exterior algebra, reducing the question to establishing a bound on the Castelnuovo-Mumford regularity of intersections of linear ideals in the exterior algebra. We prove the required regularity bound using tools from representation theory. In particular, the proof relies on the existence of a functor on the category of polynomial functors that translates resolutions of ideals of subspace arrangements over the polynomial ring to resolutions of ideals of subspace arrangements over the exterior algebra.   
\end{abstract}

\section{Introduction}

In this article we study invariant skew polynomials in the exterior algebra.  A central goal of invariant theory is to determine a minimal set of generating invariants (i.e., invariant polynomials) for the action of a group on the set of variables. Here we adapt an algorithm of Derksen \cites{inv,ci} for finding invariants over the polynomial ring  to an equivalent procedure in the exterior algebra. Even though the algorithm is explicit, it is often not practical due to the number of variables involved. 

When explicitly finding invariants is computationally expensive, We want to investigate another central question in invariant theory: determining the smallest integer $\beta$ such that a
set of generating invariants of degree $\leq \beta$ exists.  Emmy Noether \cite{em} proved that for invariant polynomials under the action of a finite group $G$ of order $|G|$ we have that $\beta(G) \leq |G|$, assuming that the ground field has characteristic zero (or the characteristic of the field is much larger than the order of the group). This famous result is known as \emph{Noether's Degree Bound} . In this paper we establish that Noether's Degree Bound holds in the exterior algebra in characteristic zero.

Many mathematicians have worked on determining if analogs of Noether's Degree Bound hold in more general settings. The bound does not hold in general in positive characteristic, so researchers focused on proving the bound when the characteristic does not divide the order of the group (non-modular case). To extend the result to the non-modular case, in \cite{inv} Derksen showed that generating invariant polynomials can be computed from a set of polynomials vanishing on a certain subspace arrangement. The non-modular case of Noether's Degreee Bound was then proved independently by Fleishman \cite{fl} and Fogarty \cite{fo}, before Derksen and Sidman \cite{ds1} proved the regularity bound needed for the subspace arrangement approach to become a complete proof. We extend here the connection between invariant theory and the study of subspace arrangement to invariant skew polynomials in the exterior algebra.

On the one hand, it might not be surprising that bounds over the symmetric algebra transfer to bounds over the exterior algebra. On the other hand, invariant theory over non-commutative rings has unexpected behaviors. Kirkman, Kuzmanovich,  and Zhang \cite{kirk}, showed that invariants in the skew polynomials algebra (of which the exterior algebra is a quotient) do not satisfy Noether's Degree Bound. Research in non-commutative invariant theory is active and currently focusing on the action of Hopf algebras as an analog of group actions, see the survey article by Kirkman \cite{survey}. A very recent extension of several results from commutative invariant theory to the non-commutative setting is in the work of Kirkman, Won, and Zhang \cite{kwz}.

Suppose that $W_1,W_2,\dots,W_t$ are subspaces of an $n$-dimensional $\KK$-vector space $W\cong \KK^n$ and let $I_1,I_2,\dots,I_t\subseteq \KK[x_1,x_2,\dots,x_n]$ be the vanishing ideals of $W_1,W_2,\dots,W_t$. These vanishing ideals are linear ideals in the sense that they are generated by linear forms. To use the subspace arrangement approach for invariant theory, Derksen initially conjectured in \cite{inv} that the vanishing ideal of a union of $t$ subspaces is generated by polynomials of degree at most $t$. He then proved that Noether's Degree Bound holds in the non-modular case if the conjecture holds for $t=|G|$. Sturmfels made an even stronger conjecture: the vanishing ideal of a union of $t$ subspaces has Castelnuovo-Mumford regularity at most $t$. Because the regularity of an ideal bounds the degree of its generators, a regularity bound yields a degree bound for the generators, proving Derksen's initial conjecture from \cite{inv}. Conca and Herzog had shown that the Castelnuovo-Mumford regularity of the product ideal $I_1I_2\cdots I_t$ is equal to $t$ (\cite{ch}). Derksen and Sidman proved Sturmfels' conjecture, namely they showed that the Castelnuovo-Mumford regularity of the intersection ideal $I_1\cap I_2\cap \cdots\cap I_t$ is at most $t$ (\cite{ds1}); similar results hold for more general ideals constructed from linear ideals (\cite{ds2}). 

In this paper, we will use the regularity bound on the intersection of $t$ linear ideals to derive a result in non-commutative invariant theory. In particular, we generalize Derksen's subspace approach to the exterior algebra and prove the following the result.
\begin{Theorem}
In characteristic zero, for the action of a finite group $G$ on the exterior algebra $\we(V)$, we have that Noether's Degree Bound holds, meaning that  $\beta_V(G) \leq |G|$, for every finite dimensional vector space $V$.
\end{Theorem}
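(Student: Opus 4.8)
The plan is to transport Derksen's subspace-arrangement description of generating invariants from the symmetric algebra to the exterior algebra, and then to supply the regularity bound that the transported argument consumes.

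I would first fix the combinatorial setup. Write $t = |G|$ and $n = \dim V$. For each $g \in G$ the graph $\Gamma_g = \{(v, gv) : v \in V\} \subseteq V \oplus V$ is an $n$-dimensional linear subspace cut out by $n$ linear forms, and so determines a linear vanishing ideal $I_g$ in the polynomial ring on $V \oplus V$ and, in exactly the same way, a linear ideal $J_g$ in the skew-polynomial ring $\we(V \oplus V)$. Derksen's theorem identifies a generating set of the invariant ring with data read off, via the Reynolds operator $\frac{1}{|G|}\sum_{g \in G} g$, from a generating set of the vanishing ideal $\bigcap_{g \in G} I_g$ of the arrangement $\bigcup_{g \in G} \Gamma_g$. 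I would rerun this argument with $\we$ in place of $\sym$ throughout, noting that the $\Gamma_g$ remain linear subspaces and that the sole use of characteristic zero is the existence of the Reynolds operator, which survives the passage to the exterior algebra. The conclusion is that $\beta_V(G)$ is bounded by the largest degree of a minimal generator of the exterior-algebra intersection $J := \bigcap_{g \in G} J_g$.

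Since the top degree of a minimal generator of a graded ideal never exceeds its Castelnuovo--Mumford regularity, it now suffices to prove $\reg J \le t$. Over the polynomial ring the corresponding statement $\reg\!\left(\bigcap_{g \in G} I_g\right) \le t$ is precisely the Derksen--Sidman theorem, which I may assume. The remaining work is to transfer this bound from $\sym$ to $\we$.

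This transfer is the heart of the proof and the step I expect to be the main obstacle. I would introduce a functor on the category $\poly$ of polynomial functors given by the conjugation involution $\Sl \mapsto \Slp$ on Schur functors, under which $\sym^d$ and $\we^d$ are interchanged through the conjugate partitions $(d)$ and $(1^d)$. Viewing a minimal free resolution of $\bigcap_{g \in G} I_g$ over $\sym$ as a complex of polynomial functors evaluated on $V \oplus V$ and applying this conjugation functor termwise, I would show that the resulting complex resolves the exterior-algebra ideal $J$. The delicate points are to verify that conjugation is exact on the complexes that occur and to track precisely how it relabels the homological and internal gradings, so that the regularity extracted from the symmetric-algebra resolution bounds $\reg J$. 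Granting this bookkeeping, the Derksen--Sidman inequality gives $\reg J \le t = |G|$, whence $\beta_V(G) \le |G|$, as claimed.
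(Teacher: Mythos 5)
Your overall route coincides with the paper's: first transport Derksen's subspace-arrangement argument to the exterior algebra (the paper does this via its Hilbert invariant theorem for graded algebras, Theorem~\ref{hit}, and the identity $(I_G + (y_1,\ldots,y_n))\cap \we(\x) = J_G$ of Theorem~\ref{gansub}), and then supply the regularity bound $\reg \II'(\cA_G)\leq |G|$ by transferring the Derksen--Sidman bound through the conjugation (``transpose'') functor on polynomial functors, exactly as in Theorem~\ref{subext}. So the strategy, and even the key functor $\cS_\lambda \mapsto \cS_{\lambda'}$, are the same as the paper's.

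There is, however, one concrete gap in your transfer step, and it sits precisely at the point you flagged as the main obstacle. You propose to ``view a minimal free resolution of $\bigcap_{g} I_g$ over $\sym$ as a complex of polynomial functors evaluated on $V\oplus V$'' and apply conjugation termwise. This is not meaningful as stated: the group arrangement $\bigcup_g \Gamma_g$ is not $\GL(V\oplus V)$-stable, so its intersection ideal and its resolution carry no natural structure of polynomial functors in the variable $V\oplus V$, and conjugation has nothing to act on. (Moreover, for a fixed finite-dimensional space conjugation is badly behaved: $\cS_{\lambda'}(U)$ can vanish while $\cS_\lambda(U)$ does not, so a naive termwise swap on a fixed space cannot preserve exactness.) The paper's fix --- the content of its cited prior work --- is to keep $W = V\oplus V$ fixed as a space of ``coefficients'' and introduce an \emph{auxiliary} functorial variable $U$: one forms the tensored arrangement $\cA_G \otimes U \subset W\otimes U$ and the ideal $\ia(U) = \II\bigl(\bigcup_g \Gamma_g \otimes U\bigr) \subset \sym(W^*\otimes U^*)$, which \emph{is} a polynomial functor in $U$; the transpose functor $\Omega$ is defined on such functors (as a direct limit over $\dim U \to \infty$, which is what makes conjugation exact and well behaved), is shown to preserve regularity, and the exterior-algebra statement is recovered by evaluating at $U=\KK$. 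Without this auxiliary-variable functorialization, your termwise conjugation of the resolution has no object to apply to, so this is a missing idea rather than mere grading bookkeeping. A smaller point: characteristic zero is used not only for the Reynolds operator, as you assert, but also essentially in this polynomial-functor machinery (semisimplicity of the category and the Schur-functor decomposition).
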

We establish the theorem by considering a subspace arrangement of cardinality $t= |G|$ associated to the group $G$. The regularity bound on the ideal of the subspace arrangement in the exterior algebra from ~\cites{article} gives us the degree bound on a minimal set of generating invariants. The idea of using polynomial functors to establish results in invariant theory goes back to the times of Weyl and it continues nowadays in our work, the work of Derksen and Makam \cite{dm}, and the work of Snowden \cite{sno}.

 The regularity bound from our previous work ~\cite{article} leverages regularity results for the symmetric algebra $ \sym(W^*)$ to prove similar
 regularity bounds for ideals of subspace arrangements over the exterior algebra $ \we(W^*)$. To this goal, we associate to a subspace arrangement a family of ideals that is stable under the action of the general linear group and study them using tools from representation theory. We then construct the transpose functor $\W$ on the category of graded polynomial functors which translates modules over $\sym$ to modules over $\we$. In \cite{sam2} Sam and Snowden had already established this connection and used it to prove regularity results. Furthermore, Snowden used twisted commutative algebras to give bounds to the minimal resolution of invariant rings of finite groups in \cite{sno}. Our construction of the transpose function $\W$ leverages the graded structure of polynomial functors to forgo the need to study the structure theory of $\GL_\infty$-representations.

\section{The subspace arrangement theorem over for exterior algebra}
In ~\cite{thesis} and ~\cite{article}, the author used a functor on polynomial functors, the transpose functor $\Omega$, which preserves regularity. In particular, for a subspace arrangement $\ia$ of cardinality $t$ this implies that $\Omega(\ia)$ is $t$-regular. Before we apply this result to invariant skew polynomials, we establish some notation and definitions relevant to our use of polynomial functors.

Throughout this paper we will consider a field $\KK$ of characteristic 0. Our treatment of polynomials functors follows the classical treatment of MacDonald \cite{mac}. 
\begin{Definition}
A functor $\cF$ from $\ve$ to $\ve$ is a \emph{polynomial functor} if  the map 
$$\cF:\Hom(X,Y) \to \Hom(\cF(X),\cF(Y))$$ is a polynomial mapping for all finite dimensional $\KK$-vector spaces $X,Y$. We say that $\cF$ is homogeneous of degree $d$ if $\cF(\lambda h)=\lambda^d \cF(h)$ for every linear map $h \in \Hom(X,Y)$ and every scalar $\lambda\in \KK$.
\end{Definition}

Let $\cF$ be a polynomial functor. We will work in the graded category of polynomial functors $\gp$, where we allow for infinite direct sums of polynomial functors, but we imposes that in each degree $d$ we have a finite sum of homogeneous polynomial functors of degree $d$. For more details on the category $\gp$ see ~\cite[Section 2]{article}.

Every polynomial functor is naturally equivalent to a finite direct sum of Schur functors, denoted $\cS_\lambda$ and indexed by integer partitions. A partition is a sequence $\lambda=(\lambda_1,\lambda_2,\dots,\lambda_r)$ of positive integers with $\lambda_1\geq \lambda_2\geq \cdots\geq \lambda_r$.
For each partition $\lambda$ one can define a polynomial functor $\cS_\lambda:\ve\to \ve$ that is homogeneous of degree $|\lambda|=\lambda_1+\lambda_2+\cdots+\lambda_r$. For a finite dimensional vector space $V$, the representation $\cS_\lambda (V)$ is an irreducible representation of $\GL(V)$. For example, the space $\cS_{(d)}(V)=\sym^d(V)$ is the $d$-th symmetric power of $V$ whilst the space $\cS_{(1,1,\dots,1)}(V)=\cS_{(1^d)}(V)=\we^d(V)$
 is the $d$-th exterior power of $V$.
   It follows from Schur's lemma that 
 $$\Hom(\cS_\lambda,\cS_\mu)=\begin{cases}
 \KK & \mbox{if $\lambda=\mu$;}\\
 0 & \mbox{if $\lambda\neq \mu$.}
 \end{cases}
 $$
 By grouping the $\cS_\lambda$'s together we see that
 every  polynomial functor $\cP \in \gp$ is naturally equivalent to a direct sum 
 \[ \cP= \bigoplus_d \cP_d \cong \bigoplus_d \bigoplus_{\la \vdash d} \Sl^{a_\la} \cong \bigoplus_d \bigoplus_{\la \vdash d} \Sl \otimes \KK^{a_\la}  , \] 
where $a_\la$ is the multiplicity of the irreducible representation $\Sl$ which can be recorded in the multiplicity space $\KK^{a_\la}$.

On the category $\gp$ we define the transpose functor $\W$ on polynomial functors of degree $d$ as the direct limit of the functors $\WVD$. To construct $\WVD$, we will need the functor  $\TV:\ve\to\rep$, defined as the functor $\ten$ which acts on objects by mapping $W \in \Obj(\ve)$ to $\wv \in \Obj (\rep)$, and the functor $\HVD:\rep\to\ve$, defined as the functor $\Hom_{\glv}(\we^d(V), \Blank)$ which acts on objects by mapping $\glv$-representation $U$ to its $\we^d(V)$-isotopic component.
\begin{Definition}[\cite{article}, Definition 3.1]
Fix a vector space $V$ of dimension $n$. Fix a degree $d$ such that $n \geq d$.
For a polynomial functor $\cP_d$ of degree $d$ in $\gp$, the functor $\WVD(\cP_d):\ve\to\ve$ is defined as 
\[ \WVD(\cP_d) = \HVD\circ \cP_d \circ\TV .\] 
\end{Definition}

To define $\W$, we need to establish a directed system of functors $\WVD$. For $i<j$, let $\rho_{ji}$ be the inclusion $\rho_{ji}: V_i\to V_j$. So we have a natural transformation $g_{i,j}:\Omega_{d,V_j}(\cP_d)\to \Omega_{d,V_i}(\cP_d)$ with a unique splitting $f_{j,i}: \Omega_{d,V_i}(\cP_d)\to \Omega_{d,V_j}(\cP_d)$ such that $g_{i,j}\circ f_{j,i}$ is the identity. 
\begin{Definition}[\cite{article}, Definition 3.6-3.7]
Consider the direct system where $f_i=f_{i+1,i}$, as described above. We define $\W_d(\cP_d)$ to be the following direct limit 
\[ \W_d(\cP_d) = \varinjlim_i \W_{d,V_i}(\cP_d) .\]  
Let $\cP \in \poly$ be decomposed as $\cP = \bigoplus_d \cP_d$. We define $\W(\cP)$ to be: 
\[ \W(\cP) = \bigoplus_d \W_d ( \cP_d) .\] 
\end{Definition}
\begin{Remark}
In concrete terms, we often think of $\W$ in terms of its effect on the simple objects of $\polyd$, the Schur functors $\Sl$. In particular, the polynomial functor $\W (\Sl)$ is naturally equivalent to $\Slp$ and $\W(\sym) = \we$ (see \cite[Section 4.3.1]{thesis}).
\end{Remark}

In the category $\gp$ we can define polynomial functors associated to subspace arrangements.  Let $\cA = \{ W_1, \ldots, W_t \} \subset W \cong \KK^m $ be  a collection of linear subspaces in $W$ and let $J_i = \II(W_i)$ be the their vanishing ideals. The intersection ideal associated to $\cA$ is the ideal $ \II (\cA) = \II (W_1 \cup \cdots \cup W_t ) = \bigcap_{i=1}^t J_i$. 
\begin{Definition}
Let $V$ be any object in the category of finite dimensional vector spaces $\ve$. In the polynomial ring $\cS(W^* \otimes V^*)$ we define $\ia (V)$ to be the vanishing ideal of the subspace arrangement $\cA \otimes V$, i.e.,
\[ \ia(V) = \II( W_1\otimes V \cup \cdots \cup W_t \otimes V). \]
\end{Definition}
Notice that for every vector space $V$, we have that $\ia(V)$ is a homogeneous ideal in $\cS(\wv)$ so that can define a monomorphism $\ia \into \cS(W \otimes \blank)$. Thus for every $d$, we have that $(\ia)_d$ is a polynomial functors of degree $d$, giving $\ia$ the structure of an object in $\gp$. Furthermore, $\ia$ is a module functor over the algebra functor   $\cS(W \otimes \blank)$ in $\gp$.  For details on  the axioms of algebra and module functors see \cite[Section 2.2]{article} and for more details on the description of $\ia$ as a module functor we refer the reader to \cite[Section 5]{article}. 


We want to study the action of the transpose functor $\W$ on the polynomial functor $\ia$ and 
express the result in terms of the intersection ideal in the exterior algebra.
\begin{Definition}
Let $\cA = \{ W_1, \ldots, W_t \} \subset W \cong \KK^m $ be a subspace arrangement. Suppose that $S_i$ be the set of linear forms vanishing on $W_i$ and let $J_i$ be the ideal generated by $S_i$ in $E= \we(W^*)$. We define $\II'(\cA)$, the intersection ideal of $\cA$ in $E$ to be 
\[ \II'(\cA) = \bigcap_i J_i . \]
\end{Definition}
\begin{Remark}
A word of caution: we cannot consider the non-linear skew polynomials in $\we(W^*)$ as $\KK$-valued functions on $W$. The reader may think of $\II'(\cA)$ as the vanishing ideal of $\cA$ in $E$, but one must be careful not to abuse this heuristic.  
\end{Remark}

\begin{Theorem}[Subspace arrangement theorem for the exterior algebra]\label{subext}
If $\cA$ is an arrangement of $t$ subspaces in $\KK^n$, then the ideal $\II'(\cA)$ in the exterior algebra $\we (x_1, \ldots, x_n )$ is $t$-regular. In particular, this ideal is generated in degree at most $t$. 
\end{Theorem}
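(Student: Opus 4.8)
The plan is to transport the known regularity bound over the polynomial ring to the exterior algebra through the transpose functor $\W$, and then to identify the transported object with $\II'(\cA)$. Recall first that for every finite-dimensional $V$ the ideal $\ia(V) = \bigcap_{i=1}^{t}\II(W_i \otimes V)$ is an intersection of $t$ linear ideals in $\cS(W^* \otimes V^*)$, so the Derksen--Sidman theorem \cite{ds1} gives $\reg \ia(V) \le t$; passing to the associated module functor this says $\reg \ia \le t$ in $\gp$. Since $\W$ preserves regularity, as established in \cite{article} and recalled at the start of this section, the object $\W(\ia)$ is $t$-regular over the exterior algebra. The remaining task is to identify $\W(\ia)$ with $\II'(\cA)$ and to convert $t$-regularity into the degree statement.

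The core of the argument is the isomorphism $\W(\ia) \cong \II'(\cA)$, and the delicate point is that a functor need not commute with intersections. I would therefore realize the intersection as a kernel and use the exactness of $\W$. The ideal $\ia$ sits in the left-exact sequence of module functors
\[
0 \longrightarrow \ia \longrightarrow \cS(W \otimes \blank) \longrightarrow \bigoplus_{i=1}^{t} \cS(W_i \otimes \blank),
\]
whose $i$-th component is the canonical surjection onto the quotient $\cS(W\otimes\blank)/J_i \cong \cS(W_i \otimes \blank)$ and whose kernel is $\bigcap_i J_i = \ia$. Now $\W$ is exact: it is built from $\TV = \blank \otimes V$, which is exact, and $\HVD = \Hom_{\glv}(\we^d V, \blank)$, which extracts an isotypic component and is therefore exact because $\GL(V)$-representations are semisimple in characteristic zero; the direct limit over $V$ defining $\W$ is filtered and hence also exact. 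Applying $\W$ and using $\W(\sym) = \we$ --- equivalently the dual Cauchy identity $\we^d(W \otimes V) = \bigoplus_{\la \vdash d}\cS_\la(W)\otimes \cS_{\la'}(V)$, which records exactly the effect $\cS_\la \mapsto \cS_{\la'}$ of $\W$ on the $\blank$-variable --- turns the sequence into
\[
0 \longrightarrow \W(\ia) \longrightarrow \we(W \otimes \blank) \longrightarrow \bigoplus_{i=1}^{t} \we(W_i \otimes \blank).
\]
The kernel of the right-hand map is $\bigcap_i J_i'$, the intersection of the exterior linear ideals $J_i' \subseteq \we(W^*)$ generated by the forms vanishing on $W_i$; that is, $\W(\ia) \cong \II'(\cA)$.

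Putting the two halves together, $\II'(\cA)$ is $t$-regular, and the final clause of the theorem is the general fact that regularity bounds the degrees of minimal generators: the zeroth syzygies of a $t$-regular module lie in degree at most $t$, so $\II'(\cA)$ is generated in degree at most $t$. I expect the identification step to be the main obstacle. Concretely, one must check that the module-functor structure and the duality conventions relating $\cS(W \otimes \blank)$ to vanishing ideals in $\cS(W^* \otimes V^*)$ are faithfully carried over by $\W$, that $\W$ really sends each quotient $\cS(W\otimes\blank)/J_i$ to the exterior quotient $\we(W\otimes\blank)/J_i'$ compatibly with the projections, and that the functorial $t$-regularity of $\W(\ia)$ specializes to $t$-regularity of the concrete ideal $\II'(\cA)$ in $\we(W^*)$ --- so that the computed kernel is genuinely the full intersection.
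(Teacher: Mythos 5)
Your proposal is correct and takes essentially the same route as the paper: both transport the Derksen--Sidman bound through the transpose functor $\W$, using that $\W$ preserves regularity, and then specialize the functorial $t$-regularity of the transported object to the concrete ideal $\II'(\cA)$. The only difference is that you supply an argument for the identification $\W(\ia)\cong \ia'$ (realizing the intersection as a kernel and using exactness of $\W$, which holds by semisimplicity in characteristic zero), whereas the paper simply asserts this identification, relying on its prior work \cite{article}.
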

\begin{proof}
Consider the polynomial functor $\ia'$ associated to the intersection ideal $\II'(\cA)$ in the exterior algebra. We have that $\ia'= \Omega(\ia)$ for $\ia$ the polynomial functor associated to the intersection ideal $\II(\cA)$ in the symmetric algebra. By a result of Derksen and Sidman \cite{ds1}, we know that $\II(\cA)$ is $t$-regular, so it is generated in degree at most $t$. Moreover, the same result also gives us that $\ia(V)$ is $t$-regular, for any finite dimensional vector space $V \in \Obj(\ve)$. Thus, $\ia$ is $t$-regular. Applying $\Omega$, we can conclude that $\ia'=\Omega(\ia)$ is also $t$ regular, meaning that $\ia'(V)$ is $t$-regular for any $V \in \Obj(\ve)$. In particular, $\II'(\cA)$ is $t$-regular, so it is generated in degree at most $t$.
\end{proof}

Derksen in ~\cites{inv, ci} provides an algorithm to compute invariant polynomials for the action of a finite group $G$ acting on a vector space $V$ using the ideal generators of the vanishing ideal of the group arrangement.
\begin{Definition}
Let $G$ be a finite group and $V$ a representation of $G$. We define the group arrangement $ \cA_G$ as
\[ \cA_G = \bigcup_{g \in G} \{ (v, g \cdot v) \mid v \in V \} \subset V \oplus V. \]
\end{Definition}

We can find explicit equations for $\cA$ by picking a basis for $V \oplus V $ with coordinate functions $(x_1, \ldots, x_n, y_1, \ldots, y_n )$. Let $A(g)=(A(g)_{i,j})$ be the matrix representing the action of $g$ on $V$. Then the linear subspace $V_g$ is cut out by the linear equations 
$y_i=\sum_{j=1}^n A(g)_{i,j} \, x_j$, for $i=1,2,\dots,n$. 

We can consider the ideal generated by the set 
$$S_g=\left\{y_1-\sum_{j=1}^n A(g)_{1,j}\, x_j,\ldots,y_n-\sum_{j=1}^n A(g)_{n,j} \, x_j \right\}$$ 
in the polynomial ring or in the skew polynomial ring. For $J_g = \II(V_g)$ in the polynomial ring $\KK[\x,\y]$, we say that $J_g$ is the ideal of functions vanishing on $V_g$. For $f \in J_g'$, the ideal generated by $S_g$ in the exterior algebra $\we(\x,\y)$, we have that the ring homomorphism $\phi_g$ given by the substitution $y_i \mapsto \sum_{j=1}^n A(g)_{i,j} x_j$ sends $f$ to zero i.e., $\phi_g(f) =0$.  In this sense we mean that $f$ ``vanishes" on $V_g$. Moreover, the vanishing ideal of $\cA_G$ in the polynomial ring is the intersection of the ideals $J_g = \II(V_g)$. The ideal $\II'(\cA_G)$ is the intersection of the ideals $J'_g= \langle S_g \rangle$ in $E = \we( \x, \y)$, so it is the intersection of the ideals generated generated by each set $S_g$ in the exterior algebra. Notice that for $f \in \II'(\cA)$ we have that the substitution $\phi_g$ is such that $\phi_g(f)=0$, for all $g$. In this sense we say that $\II'(\cA)$ is the ``vanishing" ideal of $\cA_G$ in $E= \we(\x,\y)$.
    
In the next sections we will prove an analog of Derksen's result for computing invariants in the new context of invariant skew polynomials in the exterior algebra. We will first prove a classical result of invariant theory in the general setting of (possibly non-commutative) graded algebras. 

\section{Hilbert invariant theorem for graded algebras}
Let $A$ be a graded $\KK$ algebra such that $A = \bigoplus_{d \geq 0} A_d$ and $A_0 \cong \KK$. Let $G$ be a linearly reductive group acting regularly on $A$ by degree-preserving automorphisms. In particular, notice that $A_d$ is a representation of $G$ for every $d$. We 
denote the subspace of fixed points of $G$ in $A_d$ by 
\[ A_d^G= \left\{ f \in A_d \mid  g \cdot f = f \, , \, \forall g \in G \right\}. \]
We denote by $A_+$ the ideal $\bigoplus_{d > 0} A_d$ whilst $A_+^G$ denotes $\bigoplus_{d > 0} A_d^G$. Notice that for $A^G = \bigoplus_{d \geq 0} A^G_d$, we have that $A^G \cong \KK \oplus A_+^G$ as $g \cdot a_0 = a_0$, for all $a_0 \in A_0 \cong \KK$. Moreover, we let the Hilbert left ideal $I_G$ be
\[ I_G = A A_+^G  = \left\{ \sum a_i f_i \mid a_i \in A, f_i \in A_d^G , \text{ for some $d>0$} \right\}. \] 

We introduce the notion of a Reynolds operator in this general setting. For a linearly reductive group, the space $V^G$ has a unique $G$-stable complement in $V$. This means that there is a unique linear $G$-invariant projection $V\to V^G$.
\begin{Definition}
Let $G$ be a linearly reductive group and $V$ be a representation of $G$. We define $\cR_G$, the Reynolds operator of $G$, to be the unique $G$-equivariant linear projection $\cR_G:V\to V^G$.
\end{Definition}

\begin{Lemma}
For any $G$-representations $V$ and $W$ and $G$-equivariant map $\phi: V \to W$  we have that the following diagram commutes:
\begin{equation}\label{eq:commuting}
\begin{tikzcd}[column sep = 1.2in]
 V \arrow[r, "\phi"] \arrow[d, "\cR_G" left] & W  \arrow[d, "\cR_G"]  \\
 V^G \arrow[r, "\phi\mid_{V^G}"] &  W^G 
\end{tikzcd} 
\end{equation}
\end{Lemma}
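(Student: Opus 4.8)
The plan is to test the claimed identity on the canonical decomposition of $V$ coming from the Reynolds projection. Since $G$ is linearly reductive, $\cR_G\colon V\to V^G$ is a $G$-equivariant projection, so we may write $V=V^G\oplus V_1$ with $V_1=\Ker(\cR_G\colon V\to V^G)$, and likewise $W=W^G\oplus W_1$ with $W_1=\Ker(\cR_G\colon W\to W^G)$. Both ways around the square, $\cR_G\circ\phi$ and $(\phi|_{V^G})\circ\cR_G$, are linear maps $V\to W^G$, so to prove they coincide it is enough to check agreement on each of the two summands $V^G$ and $V_1$.

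On $V^G$ the check is immediate: for $v\in V^G$, equivariance of $\phi$ gives $\phi(v)\in W^G$, on which $\cR_G$ acts as the identity, so $(\cR_G\circ\phi)(v)=\phi(v)$; on the other hand $\cR_G(v)=v$, so $((\phi|_{V^G})\circ\cR_G)(v)=\phi(v)$ as well. On $V_1$ the right-hand composite vanishes because $\cR_G$ does, so the entire lemma reduces to showing that the left-hand composite also vanishes there, i.e. that $\phi$ maps the complement into the complement: $\phi(V_1)\subseteq W_1$.

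This last inclusion is the only real content, and it is where linear reductivity enters. I would argue that $V_1$, being the unique $G$-stable complement of the trivial isotypic part $V^G$, is a direct sum of nontrivial irreducible subrepresentations; its image $\phi(V_1)$ is then a quotient of such a module and hence, by semisimplicity, again a sum of nontrivial irreducibles carrying no trivial summand. By Schur's lemma the projection of $\phi(V_1)$ onto the trivial isotypic component $W^G$ is zero, which is precisely the statement $\phi(V_1)\subseteq W_1=\Ker(\cR_G\colon W\to W^G)$. I expect this isotypic/Schur step to be the sole delicate point, the rest being formal. For finite $G$ one can avoid it, since then $\cR_G=\tfrac1{|G|}\sum_{g\in G}g$ and commutativity follows in one line from $g\cdot\phi(v)=\phi(g\cdot v)$; the decomposition argument above is the form that persists for arbitrary linearly reductive $G$.
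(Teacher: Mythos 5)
Your proof is correct, but there is nothing in the paper to compare it against: the paper states this lemma \emph{without} proof, treating the commutativity of the square as a standard property of the Reynolds operator, and the proof environment that immediately follows belongs to the Corollary (properties (i) and (ii)), which is deduced from the lemma rather than proving it. So your argument genuinely fills a gap the paper leaves to the reader, and it is complete: you decompose $V=V^G\oplus V_1$ and $W=W^G\oplus W_1$ with $V_1=\Ker(\cR_G\colon V\to V^G)$ and $W_1=\Ker(\cR_G\colon W\to W^G)$, check agreement of the two composites on $V^G$ directly from equivariance of $\phi$ (which also shows the bottom map $\phi\mid_{V^G}$ lands in $W^G$, so the diagram makes sense), and correctly isolate the inclusion $\phi(V_1)\subseteq W_1$ as the only real content. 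That step is sound as you argue it: by linear reductivity $V_1$ is a sum of nontrivial irreducibles (a trivial summand would consist of fixed vectors, hence lie in $V^G$), its image under $\phi$ is again such a sum by semisimplicity, and Schur's lemma kills any equivariant map from a nontrivial irreducible into the trivial isotypic space $W^G$. Your closing observation is also worth keeping: for finite $G$ in characteristic zero one has $\cR_G=\tfrac{1}{|G|}\sum_{g\in G}g$, and commutativity is a one-line computation from $g\cdot\phi(v)=\phi(g\cdot v)$; since the paper's applications (the Hilbert ideal theorem and Noether's bound for the exterior algebra) concern finite groups, that shortcut would suffice there, while your isotypic argument covers the lemma in the full linearly reductive generality in which it is stated.
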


\begin{Corollary}
Notice that the operator $\cR_G$ will satisfy the following properties:
\begin{enumerate}[label=(\roman*)]
\item For a $G$-stable subspace $U$, we have that $\cR_G(U)=U^G$;
\item We have that $\cR_G$ is a $A^G$-bimodule homomorphism, meaning that if $a,b \in A^G$, then $\cR_G(afb) = a \cR_G(f) b$, for any $f \in A$. 
\end{enumerate}
\end{Corollary}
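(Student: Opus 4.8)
The plan is to derive both properties directly from the commuting diagram \eqref{eq:commuting} of the preceding Lemma, the only real task being to produce, in each case, a $G$-equivariant map $\phi$ to which that Lemma applies.

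For part (i), I would take $U$ a $G$-stable subspace and apply the Lemma to the inclusion $\iota : U \into A$, which is $G$-equivariant precisely because $U$ is $G$-stable. The diagram then yields $\cR_G\circ\iota = \iota|_{U^G}\circ\cR_G$, i.e.\ the Reynolds operator of $A$ restricted to $U$ agrees with the Reynolds operator of $U$ (consistent with the uniqueness clause in the definition of $\cR_G$). Hence $\cR_G(U)$ is the image of the projection $\cR_G^U : U \to U^G$, and since $\cR_G^U$ is idempotent with fixed space $U^G$ it acts as the identity on $U^G$ and is therefore surjective onto it; this gives $\cR_G(U) = U^G$.

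For part (ii), I would fix $a,b \in A^G$ and feed the Lemma the two-sided multiplication map $\phi : A \to A$, $\phi(x) = axb$. The first step is to verify that $\phi$ is $G$-equivariant: because $G$ acts by algebra automorphisms and $a,b$ are invariant, $g\cdot(axb) = (g\cdot a)(g\cdot x)(g\cdot b) = a\,(g\cdot x)\,b$, so $\phi(g\cdot x) = g\cdot\phi(x)$. Applying the Lemma with $V = W = A$ and this $\phi$, and using that $\cR_G(f) \in A^G$ so that $\phi|_{A^G}$ carries it to $a\,\cR_G(f)\,b$, evaluating the commuting square at $f$ produces $\cR_G(afb) = a\,\cR_G(f)\,b$.

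I expect the arguments to be short, with the only substantive verification being the $G$-equivariance of the two maps: the inclusion in (i) is immediate from $G$-stability, while for (ii) the key input is that $G$ acts on $A$ by algebra (not merely linear) homomorphisms, so that products are preserved, together with the invariance of $a$ and $b$. This is exactly where the hypothesis that $G$ acts by degree-preserving automorphisms enters; beyond it no representation-theoretic subtlety remains, since linear reductivity has already been absorbed into the existence and naturality of $\cR_G$ furnished by the Lemma.
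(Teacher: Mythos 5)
Your proposal is correct and follows essentially the same route as the paper: part (i) via applying the commuting-square lemma to the inclusion of the $G$-stable subspace, and part (ii) via the two-sided multiplication map $\phi(x)=axb$, whose $G$-equivariance rests on $G$ acting by algebra automorphisms and on the invariance of $a$ and $b$. The paper's proof is merely terser, asserting the equivariance that you verify explicitly.
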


\begin{proof} \ 
\begin{enumerate}[label=(\roman*)]
\item For the inclusion map $\phi:U\to V$ we have a commuting diagram (\ref{eq:commuting}),
so $\cR:V\to V^G$ restricts to the Reynolds operator $\cR:U\to U^G$.
\item Define $\phi:A\to A$ by $\phi(f)=afb$. Then $\phi$ is a $G$-equivariant linear map,
hence we have 
$$\cR_G(afb)=\cR_G(\phi(f))=\phi(\cR_G(f))=a\cR_G(f)b.$$
\end{enumerate}
\end{proof}

We will use this general notion of a Reynolds operator applied to the graded algebra $A$ to prove a version of Hilbert invariant theorem for $A$. We build up to this result with the following lemmas.

\begin{Lemma}[Ideal generators are algebra generators]\label{l1}
Suppose that $\{f_1, \ldots, f_r \}$ generate $A_+$ as a left ideal, then $A= \KK \langle f_1, \ldots, f_r \rangle$, the subalgebra generated by $f_1,\dots,f_r$. 
\end{Lemma}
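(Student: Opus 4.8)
The plan is to prove the lemma by a graded Nakayama induction on degree, using only the hypothesis that $A_+=\sum_{i=1}^r A f_i$ as a left ideal together with $A_0\cong\KK$. Writing $B=\KK\langle f_1,\dots,f_r\rangle$ for the subalgebra generated by the $f_i$, the inclusion $B\subseteq A$ is automatic, so the entire content is the reverse inclusion $A\subseteq B$, which I would establish degree by degree by showing $A_d\subseteq B$ for every $d\ge 0$. Before starting, I would record one normalization: since each generator satisfies $f_i=1\cdot f_i\in A f_i\subseteq A_+$, no $f_i$ has a degree-$0$ part, and in the graded setting we take the $f_i$ to be homogeneous of positive degrees $d_1,\dots,d_r\ge 1$. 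This homogeneity is precisely what makes $B$ a graded subalgebra and what forces the cofactors appearing below to drop in degree.

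For the induction, the base case $d=0$ is immediate because $A_0\cong\KK\subseteq B$. For the inductive step I would assume $A_e\subseteq B$ for all $e<d$ (with $d\ge 1$) and take a homogeneous $f\in A_d$. Then $f\in A_+$, so I can write $f=\sum_i a_i f_i$ with $a_i\in A$. Comparing degree-$d$ homogeneous components on both sides and using that each $f_i$ is homogeneous of degree $d_i$, only the degree-$(d-d_i)$ part of $a_i$ survives, so I may assume $a_i\in A_{d-d_i}$. Since every $d_i\ge 1$ we have $\deg a_i=d-d_i<d$ (and terms with $d-d_i<0$ vanish), so $a_i\in B$ by the inductive hypothesis. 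As $B$ is a subalgebra containing the $f_i$, each product $a_i f_i$ lies in $B$, hence $f\in B$. Here the possible non-commutativity of $A$ causes no difficulty: the left-ideal expression multiplies each $f_i$ on the left by $a_i\in B$, and $B$ is closed under multiplication. Summing over $d$ then yields $A\subseteq B$, and therefore $A=B=\KK\langle f_1,\dots,f_r\rangle$.

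The main obstacle — indeed the only genuinely substantive point — is the degree bookkeeping in the inductive step, namely the graded Nakayama phenomenon that the cofactors $a_i$ can be taken of strictly smaller degree than $f$. This hinges entirely on the generators being homogeneous of positive degree, so I would take care to justify working with homogeneous generators of $A_+$ at the outset; once that is in place, the comparison of homogeneous components and the induction are purely formal, and nothing beyond the left-ideal structure and $A_0\cong\KK$ is needed.
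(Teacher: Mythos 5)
Your proof is correct and takes essentially the same route as the paper's: both argue by induction on the degree of a homogeneous element $g \in A_+$, write $g = \sum_i a_i f_i$, reduce to cofactors $a_i$ of strictly smaller degree by comparing homogeneous components, and apply the inductive hypothesis together with closure of the subalgebra $\KK\langle f_1,\dots,f_r\rangle$ under left multiplication. The only difference is that you explicitly record the normalization that the $f_i$ may be taken homogeneous of positive degree, a point the paper leaves implicit in its phrase ``after cancellation.''
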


\begin{proof}
As $A$ is a graded algebra, it is enough to prove that for any homogeneous $g \in A$, we have that $g \in \KK \langle f_1, \ldots, f_r \rangle$. We will prove the claim by induction on $d = \deg(g)$. If $\deg(g)= 0$, then the claim is obvious. Assume that $d = \deg(g) > 0$, so that $g \in A_+$. Then we can write $g = \sum a_i f_i$ and after cancellation we can assume that $\deg(a_i) + \deg(f_i) = d$ for all $i$. Thus, $\deg(a_i) <d$ and by induction $a_i \in \KK \langle f_1, \ldots, f_r \rangle$. Therefore, $g \in \KK \langle f_1, \ldots, f_r \rangle$, as required.
\end{proof}

\begin{Lemma}[Invariant ideal generators are invariant algebra generators]\label{l2}
Suppose that homogeneous $f_1, \ldots, f_r \in A^G_+\subset I_G$ generate $I_G$ as a left ideal.
Then $A^G= \KK \langle f_1, \ldots, f_r \rangle$, the free algebra of words in the letters $f_i$. 
\end{Lemma}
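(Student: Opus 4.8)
The plan is to mirror the inductive argument of Lemma~\ref{l1}, but to use the Reynolds operator $\cR_G$ to descend each left-ideal decomposition back into the ring of invariants. Since $A^G$ is graded, it suffices to show that every homogeneous $g \in A^G$ lies in the subalgebra $\KK\langle f_1, \ldots, f_r\rangle$, and I would induct on $d = \deg(g)$. The base case $d=0$ is immediate, since $g \in A_0 \cong \KK \subseteq \KK\langle f_1, \ldots, f_r\rangle$.

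For the inductive step, suppose $\deg(g) = d > 0$, so that $g \in A_+^G \subseteq I_G$. Because $f_1, \ldots, f_r$ generate $I_G$ as a left ideal, I can write $g = \sum_i a_i f_i$ with $a_i \in A$; comparing graded pieces and cancelling, I may assume each $a_i$ is homogeneous with $\deg(a_i) + \deg(f_i) = d$, and hence $\deg(a_i) < d$ since each $f_i$ has positive degree.

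The key step is to apply $\cR_G$. Since $g$ is invariant, $\cR_G(g) = g$, so $g = \sum_i \cR_G(a_i f_i)$. Now each $f_i$ lies in $A^G$, so the bimodule property of the Reynolds operator (Corollary, part (ii), taken with $a=1$ and $b = f_i$) gives $\cR_G(a_i f_i) = \cR_G(a_i) f_i$. This produces the invariant decomposition $g = \sum_i \cR_G(a_i)\, f_i$. Because $\cR_G$ is $G$-equivariant and $G$ acts by degree-preserving automorphisms, $\cR_G$ restricts to a map $A_{\deg(a_i)} \to A^G_{\deg(a_i)}$, so each $\cR_G(a_i) \in A^G$ is homogeneous of degree $< d$. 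By the inductive hypothesis $\cR_G(a_i) \in \KK\langle f_1, \ldots, f_r\rangle$, and therefore $g \in \KK\langle f_1, \ldots, f_r\rangle$, completing the induction.

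The main subtlety — and the only place this proof genuinely departs from Lemma~\ref{l1} — is ensuring that the decomposition of $g$ coming from the left-ideal hypothesis can be replaced by one with \emph{invariant} coefficients of strictly smaller degree, so that the induction can be fed back in. This is exactly what the bimodule property of $\cR_G$ secures: it lets me pull the invariant generators $f_i$ out of $\cR_G(a_i f_i)$ on the right while simultaneously projecting the a priori non-invariant coefficients $a_i$ onto $A^G$. Since $A$ is possibly non-commutative, the sidedness genuinely matters, so I would emphasize that pulling $f_i$ out on the right is legitimate precisely because $\cR_G$ is a two-sided $A^G$-module homomorphism and $f_i \in A^G$; the resulting expression $g = \sum_i \cR_G(a_i) f_i$ is again a left-ideal-style sum, now with invariant left coefficients of smaller degree.
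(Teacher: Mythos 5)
Your proof is correct and is essentially the paper's own argument: both hinge on applying the Reynolds operator to a left-ideal decomposition $g = \sum_i a_i f_i$ and using the bimodule property to get $\cR_G(a_i f_i) = \cR_G(a_i) f_i$, producing invariant coefficients. The only difference is organizational --- the paper uses this step once to conclude that the $f_i$ generate $A^G_+$ as a left ideal of $A^G$ and then cites Lemma~\ref{l1} applied to the graded algebra $A^G$, whereas you inline that lemma's induction on degree directly.
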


\begin{proof}
As $A^G$ is a graded algebra, it is enough to prove that for any homogeneous $g \in A^G$, we have that $g \in \KK \langle f_1, \ldots, f_r \rangle$. If $\deg(g)= 0$, then the claim is obvious. Assume that $d = \deg(g) > 0$, so that $g \in A^G_+ \subset I_G$. Then $g = \sum a_i f_i$. Apply the Reynolds operator $\cR$ to $g$. We have that
\[ g = \cR(g)= \sum \cR(a_i f_i) = \sum \cR(a_i) f_i, \]
by the assumption that $f_i \in A_+^G$ and property (ii) of $\cR$. Thus, $g$ lies in the left ideal generated by $\{f_1, \ldots, f_r\}$ in $A^G$. This means that $\{f_1, \ldots, f_r\}$ is a set of left ideal generators for $A_+^G$ in $A^G$. Therefore, by Lemma \ref{l1}, we have that $A^G = \KK \langle f_1, \ldots, f_r \rangle$, as required. 
\end{proof}

Before stating our final lemma, we need the graded version of Nakayama's lemma.
\begin{Lemma}[Graded Nakayama's lemma]\label{gnaka}
Suppose that $M$ is a finitely generated graded left $A$-module such that $A_+ M = M$. Then we have $M=0$.
\end{Lemma}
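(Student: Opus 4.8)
The plan is to argue by contradiction, exploiting the fact that a finitely generated graded module over a non-negatively graded algebra is bounded below in degree. The hypothesis $A_0 \cong \KK$ and $A = \bigoplus_{d \geq 0} A_d$ (so $A$ has no components in negative degree) is what makes this work, together with finite generation.

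First I would establish the boundedness. Since $M$ is finitely generated and graded, choose homogeneous generators $m_1, \dots, m_k$ of degrees $e_1, \dots, e_k$, and set $d_0 = \min_i e_i$. Every element of $M$ is an $A$-combination $\sum_i a_i m_i$ with $a_i \in A$; because $A$ is concentrated in non-negative degrees, each term $a_i m_i$ lives in degrees $\geq e_i \geq d_0$. Hence $M_d = 0$ for all $d < d_0$. Consequently, if $M \neq 0$, there is a well-defined least degree $d_{\min}$ with $M_{d_{\min}} \neq 0$.

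Next I would read the hypothesis $A_+ M = M$ in this bottom degree. Decomposing by degree, the degree-$d_{\min}$ component of $A_+ M$ is $\sum_{e > 0} A_e \cdot M_{d_{\min} - e}$. By minimality of $d_{\min}$, each factor $M_{d_{\min} - e}$ with $e > 0$ vanishes, so $(A_+ M)_{d_{\min}} = 0$. On the other hand, the equality $A_+ M = M$ forces $(A_+ M)_{d_{\min}} = M_{d_{\min}} \neq 0$, a contradiction. Therefore no such $d_{\min}$ exists and $M = 0$.

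The only delicate point—and the single place where finite generation is indispensable—is the boundedness-below step: without a lower bound on degrees (for example, a $\ZZ$-graded module unbounded below, such as $\KK[x,x^{-1}]$ over $\KK[x]$) the equality $A_+ M = M$ can hold with $M \neq 0$. Everything else is a routine degree count, so I expect the argument to be short once boundedness is in place.
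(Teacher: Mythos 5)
Your proof is correct and follows essentially the same route as the paper: both arguments pick the smallest degree $d$ with $M_d \neq 0$ and observe that $(A_+M)_d = \sum_{e>0} A_e M_{d-e} = 0$ by minimality, contradicting $A_+M = M$. The only difference is that you explicitly justify, via finite generation and the non-negative grading of $A$, that $M$ is bounded below in degree so that a minimal nonzero degree exists — a point the paper's proof takes for granted, and indeed the one place where the finite-generation hypothesis enters.
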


\begin{proof}
Let $i$ be the smallest positive degree such that $M_i \neq 0$. Then $M_i \cap A_+M = 0$, but by assumption $A_+M = M$, so $M_i \cap A_+M = M_i$. Therefore, $M = 0$.
\end{proof}

\begin{Lemma}[Replacing ideal generators with invariant ideal generators]\label{l3}
Suppose that $\{f_1, \ldots, f_r \}$ generate $I_G$ as a left ideal. Then $\{\cR(f_1), \ldots, \cR(f_r)\}$ generate $I_G$ as a left ideal.
\end{Lemma}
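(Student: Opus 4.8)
The plan is to apply the graded Nakayama lemma (Lemma~\ref{gnaka}) to the quotient module $M = I_G/J$, where $J$ denotes the left ideal generated by $\{\cR(f_1), \ldots, \cR(f_r)\}$. First I would record that $I_G = A A_+^G$ is a $G$-stable graded left ideal: since $G$ acts by degree-preserving automorphisms and fixes $A_+^G$ pointwise, $g \cdot (ah) = (g\cdot a)(g\cdot h) = (g\cdot a)\,h$ for $h \in A_+^G$, so $g \cdot I_G \subseteq I_G$. By property (i) of the Reynolds operator, $\cR$ maps the $G$-stable space $I_G$ into $I_G^G \subseteq I_G$, hence each $\cR(f_i) \in I_G$ and $J \subseteq I_G$. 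Because $I_G$ is graded and $\cR$ is degree-preserving, I would work with homogeneous generators $f_i$ from the outset; then the $\cR(f_i)$ are homogeneous as well, $J$ is a graded submodule, and $M$ is a finitely generated graded left $A$-module generated by the images of $f_1, \ldots, f_r$.

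The heart of the argument is the claim that $f_i - \cR(f_i) \in A_+ I_G$ for each $i$. To see this I would write $f_i = \sum_j a_j h_j$ with $a_j \in A$ and $h_j \in A_+^G$, which is possible since $I_G = A A_+^G$. Applying $\cR$ and using that it is an $A^G$-bimodule homomorphism (property (ii) of the Corollary), together with $h_j \in A_+^G \subseteq A^G$ in the role of the invariant right factor, gives $\cR(f_i) = \sum_j \cR(a_j) h_j$. Subtracting, $f_i - \cR(f_i) = \sum_j (a_j - \cR(a_j)) h_j$. Since $\cR$ preserves degree and fixes $A_0 \cong \KK$ pointwise, the degree-zero component of $a_j - \cR(a_j)$ vanishes, so $a_j - \cR(a_j) \in A_+$; and $h_j \in A_+^G \subseteq I_G$. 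Each summand therefore lies in $A_+ I_G$, proving the claim.

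With the claim in hand the rest is formal. From $I_G = \sum_i A f_i$ and $f_i = \cR(f_i) + (f_i - \cR(f_i)) \in J + A_+ I_G$, I obtain $I_G \subseteq J + A_+ I_G$, and the reverse inclusion is clear, so $I_G = J + A_+ I_G$. Passing to $M = I_G/J$ this reads $A_+ M = M$, and Lemma~\ref{gnaka} forces $M = 0$, i.e.\ $J = I_G$, as required. I expect the main obstacle to be the key claim: one must exploit the bimodule property of the Reynolds operator to transfer the invariance onto the coefficients $a_j$ while keeping careful track of degrees, and one must run the argument with homogeneous generators so that $J$ is graded and the graded Nakayama lemma genuinely applies.
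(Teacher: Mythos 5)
Your proposal is correct and follows essentially the same route as the paper: both show that the ideal $J$ generated by the $\cR(f_i)$ sits inside $I_G$ via property (i) of the Reynolds operator, establish $I_G = J + A_+ I_G$, and conclude with the graded Nakayama lemma. The only difference is the middle step: the paper deduces $f_i \equiv \cR(f_i) \pmod{A_+ I_G}$ from the assertion that $G$ acts trivially on $I_G/A_+ I_G$ (so $\cR$ descends to the identity there), whereas you verify the same congruence by hand from the decomposition $f_i = \sum_j a_j h_j$ and the bimodule property (ii) --- in effect supplying the computation the paper leaves implicit, together with the homogeneity bookkeeping needed for the graded Nakayama lemma to apply.
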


\begin{proof}
Notice that since $I_G$ is $G$-stable, we have that $\cR(f_i) \in I_G$, for all $i$, by property (i) of $\cR$. Thus, the left ideal generated by $\{\cR(f_1), \ldots, \cR(f_r)\}$ is contained in $I_G$. Let $J_G$ be the left ideal generated by $\{\cR(f_1), \ldots, \cR(f_r)\}$. We have that $J_G \subseteq I_G$. We want to show that this containment is actually an equality.

Consider the space $I_G/ A_+ I_G$. We have that $G$ acts trivially on $I_G/ A_+ I_G$, so that the Reynolds operator is the identity on this space. In particular, $f_i + A_+ I_G = \cR(f_i) + A_+ I_G$, for all $i$. Thus, $I_G = J_G + A_+ I_G$. Consider the module $I_G /J_G$. We have that
\[ A_+ (I_G/J_G) = (J_G + A_+I_G)/J_G = I_G /J_G. \]
By Nakayama's lemma \ref{gnaka}, we have that $I_G /J_G =0$. Therefore $J_G = I_G$, as required.
\end{proof}

\begin{Theorem}[Hilbert invariant theorem]\label{hit}
Suppose that $\{f_1, \ldots, f_r \}$ generate $I_G$ as a left ideal. Then $A^G= \KK \langle f_1, \ldots, f_r \rangle$, the free algebra of words in the letters $f_i$.
\end{Theorem}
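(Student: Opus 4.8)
The plan is to deduce the theorem from Lemma~\ref{l2} by a reduction that passes from the given generators to their homogeneous components. First I would record the easy inclusion $\KK\langle f_1,\dots,f_r\rangle\subseteq A^G$: the asserted equality forces each $f_i$ to be invariant, so I take the generators of $I_G$ to be invariant from the outset (if one starts from arbitrary generators, Lemma~\ref{l3} replaces them by the $\cR(f_i)$, which again generate $I_G$ and are invariant), and then every word in the $f_i$ is invariant. All the content is in the reverse inclusion $A^G\subseteq \KK\langle f_1,\dots,f_r\rangle$.

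For that, I would write each generator as its finite sum of homogeneous components $f_i=\sum_e (f_i)_e$. Since $G$ acts by degree-preserving automorphisms, every component $(f_i)_e$ is again invariant, so $(f_i)_e\in A_+^G$. Because $I_G=AA_+^G$ is a homogeneous left ideal that contains each $f_i$, it contains all of their homogeneous components; conversely the $f_i$ are sums of these components, so the family $\{(f_i)_e\}_{i,e}$ is a finite homogeneous invariant generating set for $I_G$. Lemma~\ref{l2} then applies verbatim to this set and gives $A^G=\KK\langle (f_i)_e : i,e\rangle$. Consequently it suffices to prove that each component $(f_i)_e$ already lies in the subalgebra $B:=\KK\langle f_1,\dots,f_r\rangle$, for then $A^G=\KK\langle (f_i)_e\rangle\subseteq B\subseteq A^G$ and the theorem follows.

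I would establish $(f_i)_e\in B$ by induction on $e$. All components of degree $<e$ lie in $B$ by the inductive hypothesis, so subtracting them from $f_i\in B$ shows that the truncation $\sum_{e'\ge e}(f_i)_{e'}$ lies in $B$ and has lowest homogeneous term exactly $(f_i)_e$. The hard part will be to isolate this leading term from its higher-degree tail inside the subalgebra $B$, which is not graded; this is the one place where the non-homogeneity of the generators genuinely intervenes. I expect to handle it with an order-filtration argument: filtering $B\subseteq A^G$ by lowest degree, the associated graded of $B$ is a graded subalgebra of the graded algebra $A^G$ that contains every $(f_i)_e$, and hence, by the previous paragraph, equals $A^G$; since $A^G$ is its own associated graded and the graded pieces $A_e^G$ are finite dimensional, this forces $B=A^G$. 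I note that if one is willing to take the generators homogeneous — which one may, after replacing them by their components — this last step is vacuous and the theorem is immediate from Lemma~\ref{l2}; the only subtlety is thus the bookkeeping needed to conclude for the original, possibly non-homogeneous, generators $f_i$ themselves.
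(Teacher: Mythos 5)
There is a genuine gap, and in fact the strengthened statement you set out to prove --- the theorem for arbitrary, possibly non-homogeneous, generators of $I_G$ --- is false over a general graded algebra $A$, so no bookkeeping can close your argument. Your reduction is sound up to the point where Lemma~\ref{l2} gives $A^G=\KK\langle (f_i)_e : i,e\rangle$; the failure is the final step, where you claim that $\mathrm{gr}(B)=A^G$ for the lowest-degree filtration on $B=\KK\langle f_1,\dots,f_r\rangle$, together with finite-dimensionality of the pieces $A_e^G$, forces $B=A^G$. Equality of associated graded algebras lets you match an element of $A^G$ only by successive approximations whose error lies in ever higher filtration degree; the resulting series converges in the completion, and since the lowest-degree filtration on a general graded $A$ is separated but not complete, nothing forces it to terminate. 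Concretely, take $A=\KK[x]$ with $G$ trivial, $f_1=x+x^3$, $f_2=x^2$. These are invariant and generate $I_G=A_+=(x)$ as an ideal, since $x=f_1-xf_2$; yet every element of $B=\KK[x+x^3,x^2]$ has odd part of the form $x(1+x^2)q(x^2)$, and $(1+x^2)q(x^2)=1$ has no polynomial solution, so $x\notin B$ and $B\subsetneq A^G=\KK[x]$ --- even though $\mathrm{gr}(B)=\KK[x]=A^G$, with one-dimensional graded pieces. So your induction on $e$ cannot isolate the leading component from its tail: here $(f_1)_1=x$ genuinely fails to lie in $B$.

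The paper's own proof is two lines --- Lemma~\ref{l3} to replace the $f_i$ by the invariant generators $\cR(f_i)$, then Lemma~\ref{l2} --- and it silently assumes exactly what the counterexample shows to be necessary: that the $f_i$ are homogeneous (then $\cR(f_i)$ is again homogeneous, since the action preserves degree and $\cR$ restricts to each $A_d$, so Lemma~\ref{l2} applies). The word ``homogeneous'' is missing from the statement of Theorem~\ref{hit}, but the proof, and the application in Theorem~\ref{nbe}, where the generators of the homogeneous ideal $J_G$ are chosen homogeneous, only make sense with it; the correct repair is to add that hypothesis (with the usual understanding that the conclusion concerns the renamed generators $\cR(f_i)$), not to extend to non-homogeneous generators. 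It is worth noting that your filtration argument is rescued precisely when the filtration is bounded: in the paper's actual setting $A=\we(V)$ one has $A_d=0$ for $d>\dim V$, so a downward induction from the top degree does show that $\mathrm{gr}(B)=A^G$ implies $B=A^G$, and your proof becomes correct for the exterior algebra. But Theorem~\ref{hit} is stated for an arbitrary graded $A$, where, as above, the non-homogeneous version fails outright.
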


\begin{proof}
Notice that by Lemma \ref{l3} we can assume without loss of generality that $f_i \in A_+^G$, for all $i$. Then the result is an immediate consequence of Lemma \ref{l2}.
\end{proof}

\section{Computing invariants over the exterior algebra}

We begin we the definition of the usual terms from invariant theory in the context of the exterior algebra $E = \we (x_1, \ldots, x_n )$. Notice that we can think of the exterior algebra as 
\[ E = \frac{\KK \langle x_1, \ldots, x_n\rangle}{(x_ix_j+x_jx_i, 1\leq i\leq j\leq n)}. \]
 In particular, $E$ is a finite-dimensional graded algebra where $E_d = 0$ for $d >n$. We denote the multiplication in $E$ by $\wedge$. As a $\KK$-vector space, we have that
\[E_d = \spa_\KK \{ x_{i_1} \wedge x_{i_2} \wedge \cdots \wedge x_{i_d}\mid 1\leq i_1<i_2<\cdots<i_d\leq n \}. \]
 Moreover, we notice that every homogeneous ideal in $E$ is a two-sided ideal: for homogeneous $a,f$ we can rewrite any product $a \wedge f$ as $\pm f \wedge a$ using the skew commutative relation $x_ix_j=-x_jx_i$, where the sign $\pm$ is determined by the degrees of $f$ and $a$. In fact, $ a \wedge f = (-1)^{\deg(f)\deg(a)} f \wedge a$. 
 
\begin{Definition}
Let $V$ be a an $n$-dimensional representation of $G$ and consider the invariant skew polynomials for this action in $E = \we(V) = \we (x_1, \ldots, x_n )$. The \emph{Hilbert ideal} $J_G$ of $G$ in $E$ is the ideal generated by all the invariants of positive degree i.e.,
$J_G=EE_+^G$, where
\[  E_+^G = \{ p \in E_+ \mid g \cdot p = p, \forall g \in G \} .\]
\end{Definition} 

Notice that  $E^G_+ = \bigoplus_{d>0} E_d^G$, where $E_d^G$ is the space of invariant skew polynomials of degree $d$. 

The key theorem in our method to compute invariant skew polynomials is an adaptation of Derksen's result to the exterior algebra context.

\begin{Theorem}\label{gansub}
Let $J_G$ be the Hilbert ideal for the action of $G$ on $E= \we(x_1, \ldots, x_n)$. Consider the ideal $I_G = \II'(\cA_G)$ in the ring $\we(x_1, \ldots, x_n, y_1, \ldots, y_n)$. We have that 
\[ (I_G + (y_1, \ldots, y_n) ) \cap \we(x_1, \ldots, x_n) = J_G. \]
\end{Theorem}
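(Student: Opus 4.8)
The plan is to reduce the two-sided equality of ideals to a single computation about the projection $\pi:\we(\x,\y)\to\we(\x)$ defined by $\pi(x_i)=x_i$ and $\pi(y_i)=0$. Since $\pi$ is a surjective algebra homomorphism with $\ker\pi=(y_1,\dots,y_n)$, for any ideal $I$ one has $\pi^{-1}(\pi(I))=I+(y_1,\dots,y_n)$; restricting to the subalgebra $E=\we(\x)$, on which $\pi$ is the identity, yields $(I_G+(y_1,\dots,y_n))\cap\we(\x)=\pi(I_G)$. Because every homogeneous ideal of the exterior algebra is two-sided and $\pi$ is a surjective homomorphism, $\pi(I_G)$ is a two-sided ideal of $E$. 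Thus the theorem is equivalent to the ideal equality $\pi(I_G)=J_G$, which I would establish by two inclusions.

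For $J_G\subseteq\pi(I_G)$, I would produce, for each homogeneous invariant $p\in E_+^G$, a preimage in $I_G$ mapping to $p$ under $\pi$; the natural candidate is $\tilde p=p(\x)-p(\y)$, where $p(\y)$ is the copy of $p$ with each $x_i$ replaced by $y_i$. The first key fact is that $\ker\phi_g=J'_g$: after the linear change of variables $y_i\mapsto y_i-\sum_j A(g)_{i,j}x_j$ the map $\phi_g$ becomes the coordinate projection killing the new $y$-variables, whose kernel is exactly the ideal generated by $S_g$. Granting this, $\tilde p\in I_G=\bigcap_g J'_g$, since $\phi_g(\tilde p)=p(\x)-\phi_g(p(\y))=p-g\cdot p=0$ by invariance of $p$ (here substituting $y_i\mapsto\sum_j A(g)_{i,j}x_j$ into $p(\y)$ reproduces the group action on $p$). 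Finally $\pi(\tilde p)=p$ because $\pi$ annihilates $p(\y)$, so $p\in\pi(I_G)$; as $\pi(I_G)$ is an ideal, $J_G=EE_+^G\subseteq\pi(I_G)$.

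For the reverse inclusion $\pi(I_G)\subseteq J_G$, take $f\in I_G$ and decompose it by degree in the $\y$ variables as $f=\sum_{k\ge0}f_k$, where $f_k$ collects the terms of $\y$-degree $k$, so that $\pi(f)=f_0$. Writing $f_k=\sum_\alpha a_{k,\alpha}\wedge y_\alpha$ with $a_{k,\alpha}\in\we(\x)$ and $y_\alpha=y_{\alpha_1}\wedge\cdots\wedge y_{\alpha_k}$, and using that $\phi_g$ fixes the $x_i$ (hence acts as the identity on $f_0$ and on the coefficients $a_{k,\alpha}$), the relation $\phi_g(f)=0$ reads $f_0=-\sum_{k\ge1}\sum_\alpha a_{k,\alpha}\wedge\phi_g(y_\alpha)$ for every $g\in G$. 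Averaging over $G$ is the crux: the main point is the identity $\phi_g(y_\alpha)=g\cdot x_\alpha$, where $x_\alpha=x_{\alpha_1}\wedge\cdots\wedge x_{\alpha_k}$, which holds once the action is normalized so that $\phi_g(y_i)=\sum_j A(g)_{i,j}x_j=g\cdot x_i$ and one uses that $\phi_g$ is an algebra homomorphism. Consequently $\frac{1}{|G|}\sum_g\phi_g(y_\alpha)=\frac{1}{|G|}\sum_g g\cdot x_\alpha=\cR_G(x_\alpha)$, which lies in $E_+^G$ because $|\alpha|=k\ge1$. Averaging the displayed relation over $g$ therefore gives $f_0=-\sum_{k\ge1}\sum_\alpha a_{k,\alpha}\wedge\cR_G(x_\alpha)\in EE_+^G=J_G$, completing the inclusion.

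I expect the main obstacle to be bookkeeping rather than conceptual: pinning down a single consistent convention for the $G$-action on $\we(V)$ so that simultaneously $\ker\phi_g=J'_g$, $\phi_g(p(\y))=g\cdot p$ for invariants $p$, and $\phi_g(y_\alpha)=g\cdot x_\alpha$, so that the averaged substitution is literally the Reynolds operator $\cR_G$. Care is also needed that every manipulation respects the Koszul signs of the exterior algebra and that ``left ideal'' and ``two-sided ideal'' coincide throughout (as noted in the excerpt for homogeneous ideals), so that $\pi(I_G)$ and $J_G$ are genuinely comparable as two-sided ideals.
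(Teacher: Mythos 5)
Your proof is correct, and its skeleton matches the paper's: both directions are proved separately, your witness $\tilde p=p(\x)-p(\y)$ for the containment $J_G\subseteq (I_G+(y_1,\ldots,y_n))\cap\we(\x)$ is exactly the paper's, and your hard containment, like the paper's, is ultimately powered by averaging substitution maps over $G$. The execution differs in three genuine ways. First, your opening reduction $(I_G+(y_1,\ldots,y_n))\cap\we(\x)=\pi(I_G)$, via $\pi^{-1}(\pi(I_G))=I_G+\ker\pi$, does not appear in the paper, which instead manipulates an explicit decomposition $f(\x)=p(\x,\y)+\sum c_i(\x)g_i(\y)$ with $p\in I_G$. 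Second, you isolate and prove the lemma $\ker\phi_g=J'_g$ by a linear change of variables; the paper only records the inclusion $J'_g\subseteq\ker\phi_g$ in its preceding discussion, yet its ``easy'' containment tacitly needs the converse exactly where yours does (to pass from $\phi_g(f(\x)-f(\y))=0$ for all $g$ to membership in $I_G=\bigcap_g J'_g$), so making this explicit fills a real gap in the exposition. Third, for the hard containment the paper factors the key operator abstractly: it applies the Reynolds operator $\cR$ for the $G$-action on $\we(\x)\otimes\we(\y)$ that is trivial on $\x$ (using that $\cR$ is a $\we(\x)$-module map and that $I_G$ is $G$-stable), followed by the evaluation $\delta=\phi_{\id}:\y\mapsto\x$, which kills $\cR(p)$ because $I_G\subseteq\ker\delta$. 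You instead decompose $f\in I_G$ by $\y$-degree and average the relations $\phi_g(f)=0$ directly. These are the same computation in two packagings, since $\delta\circ\cR=\tfrac{1}{|G|}\sum_{g\in G}\phi_g$: the paper's two-step operator is precisely your averaged substitution. What each buys: the paper's version is coordinate-free, requires no monomial bookkeeping, and uses only the formal properties (i)--(ii) of the Reynolds operator (so it reads uniformly with the linearly reductive formalism of its Section 3); yours is more elementary and self-contained, sidesteps having to verify that $I_G$ is $G$-stable, and surfaces the two facts the paper leaves implicit. Finally, your worry about conventions is resolvable exactly as you hope: with the paper's own conventions $g\cdot f(\x)=f(A(g)\x)$ and $\phi_g(y_i)=\sum_j A(g)_{i,j}x_j$, the identities $\phi_g(p(\y))=g\cdot p$ and $\phi_g(y_\alpha)=g\cdot x_\alpha$ hold simultaneously, and since your coefficient manipulations always keep $\we(\x)$-coefficients on the left of $y$-monomials and all maps involved are algebra homomorphisms, no Koszul signs ever need to be tracked.
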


\begin{proof}
The easy containment is $(I_G + (y_1, \ldots, y_n) ) \cap \we(x_1, \ldots, x_n) \supseteq I_G $. Let $f(\x) \in I_G$ and rewrite $f(\x)$ as $f(\x) = (f(\x) - f(\y)) + f(\y)$. As $f(\y)$ is a skew polynomial of positive degree in the $y$ variables, we have that $f(\y) \in (y_1, \ldots, y_n)$. On the other hand, we have that $(f(\x) - f(\y)) \in J_G$ as for any $g \in G$, we have that
\[ \phi_g (f(\x)-f(\y)) = f(\x) - f(A(g)\x) = f(\x) - g \cdot f(\x) = 0, \]
because we assumed that $f$ in an invariant in $J_G$. This establishes the first containment.

Next, we will prove the second containment $(I_G + (y_1, \ldots, y_n) ) \cap \we(x_1, \ldots, x_n) \subseteq J_G $. Any $f(\x) \in \E$ that lies in $I_G + (y_1, \ldots, y_n)$ can be written as 
\[ f(\x) = p(\x,\y) + \sum c_i(\x) g_i(\y), \]
for $p(\x,\y) \in I_G$ and $\sum c_i(\x) g_i(\y) \in (y_1, \ldots, y_n)$. In particular, for each $i$ we require that $c_i(\x) \in \E$ and $g_i(\y)$ is a skew polynomial in the $y$ variables alone with no constant term.

Let $G$ act on $V \times V$ by the trivial action on the first copy of $V$, and the given action on the second copy of $V$. The Reynolds operator for this action is such that 
\[ \cR : \we(\x) \otimes \we(\y) \to (\we(\x) \otimes \we(\y))^G = \we(\x) \otimes \we(\y)^G .\]
In particular, $\cR$ is a $\we(\x)$-module homomorphism by property (ii) of the Reynolds operator. Notice that $\cR|_{\we(\x)} = \Id_{\we(\x)}$, whilst $\cR|_{\we(\y)}:  \we(\y) \to \we(\y)^G$ is the usual Reynolds operator on $\we(\y)$.
Apply $\cR$ to $f(\x)$. We get that
\[ \cR(f(\x)) = f(\x) = \cR(p(\x,\y)) + \sum \cR(c_i(\x)) \cR(g_i(\y)) = \cR(p(\x,\y)) + \sum c_i(\x) \cR(g_i(\y)). \]
Notice that $I_G$ is $G$-stable, so we have that $\cR(p(\x,\y)) \in I_G$, by the property (i) of the Reynolds operator. 

Consider the ring map 
\[ \delta: \we(\x,\y) \to \we(\x), \] 
given by $h(\x,\y) \mapsto h(\x,\x)$. Notice that this is the just the substitution $\phi_{\id}$ given by $\y = A(\id) = \id \x$. In particular, $\delta$ acts as the identity on skew polynomials in the subring $\E$. When we apply $\delta$ to $f(\x)$ we get
\[ \delta(f(\x)) = f(\x) = \delta\cR(p(\x,\y)) + \sum \delta(c_i(\x)) \delta\cR(g_i(\y)) = \delta\cR(p(\x,\y)) + \sum c_i(\x) \cR(g_i(\x)). \]
However, notice that any $h \in I_G$ must ``vanish" on the subspace $V_{\id}$ associated to the identity of $G$, meaning that $\phi_{\id}(h)=0$. Thus $h(\x,\x)=0$ for any $h \in I_G$. Hence, $I_G$ is in the kernel of $\delta$. In particular, $\delta\cR(p(\x,\y))=0$. Applying this observation to our expression for $f(\x)$, we conclude that:
\[ f(\x) = \sum c_i(\x) \cR(g_i(\x)). \]
As for any $g$ we have that $\cR(g)$ lies in the Hilbert ideal $J_G$, the above expression establishes that $f(\x) \in J_G$, as required. 
\end{proof}

\section{Noether's Degree Bound over the exterior algebra}
Next, we present our theorem providing a bound on the degree of the invariant skew polynomials in the exterior algebra. This is one of the main results of this paper.
\begin{Theorem}[Noether's bound for the exterior algebra]\label{nbe}
Let $\KK$ be a field of characteristic zero and $G$ a finite group acting on the finite dimensional vector space $V$. Then $\we(V^*)^G$ is generated in degree at most $|G|$.
\end{Theorem}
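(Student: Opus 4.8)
The plan is to assemble the three main results established above---the regularity bound of Theorem~\ref{subext}, the reduction of Theorem~\ref{gansub}, and the Hilbert invariant theorem (Theorem~\ref{hit})---into the asserted degree bound. Write $t=|G|$ and identify $E=\we(V^*)$ with $\we(x_1,\dots,x_n)$, where $x_1,\dots,x_n$ is a basis of $V^*$. The overall strategy is to first bound the degrees of a generating set of the Hilbert ideal $J_G=EE_+^G$, and then transfer this bound from the ideal to the invariant subalgebra via Theorem~\ref{hit}.

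First I would apply Theorem~\ref{subext} to the group arrangement $\cA_G=\bigcup_{g\in G}V_g\subset V\oplus V$. Since $\cA_G$ is an arrangement of exactly $t=|G|$ subspaces, Theorem~\ref{subext} shows that $I_G=\II'(\cA_G)\subset\we(\x,\y)$ is $t$-regular and hence generated in degree at most $|G|$; fix homogeneous generators $p_1,\dots,p_s$ of $I_G$ with $\deg p_i\le|G|$. Recall that in the exterior algebra every homogeneous ideal is two-sided, so these also generate $I_G$ as a left ideal.

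Next I would push this bound down to $J_G$. Let $\pi\colon\we(\x,\y)\to\we(\x)$ be the surjective graded algebra homomorphism given by the quotient by the two-sided ideal $(y_1,\dots,y_n)$, i.e. the substitution $y_i\mapsto 0$. A short check shows that $(I_G+(y_1,\dots,y_n))\cap\we(\x)=\pi(I_G)$: an element $w\in\we(\x)$ lies in $I_G+(y_1,\dots,y_n)$ exactly when $w=\pi(w)=\pi(p)$ for some $p\in I_G$, and conversely $\pi(p)=p-(p-\pi(p))$ with $p-\pi(p)\in(y_1,\dots,y_n)$. Combining this with Theorem~\ref{gansub} gives $J_G=\pi(I_G)$. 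Because $\pi$ is a surjective homomorphism of graded algebras, it carries the left-ideal generating set $p_1,\dots,p_s$ of $I_G$ to a generating set $\pi(p_1),\dots,\pi(p_s)$ of the left ideal $J_G=\pi(I_G)$, and $\deg\pi(p_i)\le\deg p_i\le|G|$ since $\pi$ preserves degree. Thus $J_G$ is generated as a left ideal in degree at most $|G|$.

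Finally I would invoke the Hilbert invariant theorem. Applying Theorem~\ref{hit} with $A=E$---so that the Hilbert left ideal of that general setting is precisely $J_G=EE_+^G$---to the generators $\pi(p_1),\dots,\pi(p_s)$, we obtain that $\we(V^*)^G=E^G$ is generated as an algebra by the Reynolds images $\cR(\pi(p_1)),\dots,\cR(\pi(p_s))$ furnished by Lemma~\ref{l3}. Since the Reynolds operator is degree-preserving, each $\cR(\pi(p_i))$ has degree at most $|G|$, and therefore $\we(V^*)^G$ is generated in degree at most $|G|$, as claimed. I expect the only step requiring genuine care to be the identification $J_G=\pi(I_G)$ together with the observation that a surjective graded algebra homomorphism sends a bounded-degree left-ideal generating set to a bounded-degree generating set of the image; the remaining arguments are bookkeeping, with the substantive content residing in the cited Theorems~\ref{subext}, \ref{gansub}, and~\ref{hit}.
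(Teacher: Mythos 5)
Your proposal is correct and follows essentially the same route as the paper: apply Theorem~\ref{subext} to the group arrangement $\cA_G$ (with $t=|G|$ subspaces), use Theorem~\ref{gansub} to identify the Hilbert ideal, and finish with Theorem~\ref{hit} together with the fact that the Reynolds operator preserves degree. If anything, you are slightly more careful than the paper at the one step it states informally: your identity $(I_G+(y_1,\dots,y_n))\cap\we(\x)=\pi(I_G)$, where $\pi$ is the substitution $y_i\mapsto 0$, is exactly the justification needed for the paper's claim that generators of the Hilbert ideal are obtained from generators of $\II'(\cA_G)$ by setting the $y$-variables to zero.
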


\begin{proof}
By subspace theorem for the exterior algebra Theorem \ref{subext}, we have that the ideal $J_G = \II'(\cA_G)$ is generated in degree at most $|G|$. Using Theorem \ref{gansub}, we get that the Hilbert ideal $I_G$ is given by
\[ (J_G + (y_1, \ldots, y_n) ) \cap \we(x_1, \ldots, x_n). \]
This means that generators of $I_G$ are obtained from generators of $J_G$ by setting the $y$-variables equal to $0$. Since $J_G$ is generated in degree $\leq |G|$, so is $I_G$.
 Let $\{f_1, \ldots, f_r\}$ be a set of generators for $J_G$, then by Hilbert invariant theorem (Theorem \ref{hit}) we have that $\we(\x)^G = \langle \cR(f_1), \ldots, \cR(f_r)\rangle$. Notice that the Reynolds operator does not increase the degree of a skew-polynomial. Therefore,  $\we(\x)^G$ is generated in degree at most $|G|$, as required. 
\end{proof}

We can restate the above theorem by saying that in characteristic zero for $G$ a finite group acting on $\we(V^*)$, we have that $\beta_V(G) \leq |G|$ for all $V$ and $G$, so we have the Noether's Degree Bound $\beta(G) \leq |G|$ holds in this setting.

In general, Noether's Degree Bound does not hold in the non-commutative setting. Consider for example the ring
\[ F  = \frac{\KK \langle x_1, \ldots, x_n\rangle}{(x_ix_j+x_jx_i, 1\leq i< j\leq n)}, \]
the skew  polynomial ring in $x_1, \ldots, x_n$. Notice that the exterior algebra can be obtained as a quotient of $F$, namely $E = \frac{F}{(x_i^2, 1\leq i\leq n)}$. Recent work of Kirkman, Kuzmanovich, and Zhang \cite{kirk} shows that Noether's Degree Bound does not hold for a finite group $G$ acting on $F$. In particular, they show that the group $\ZZ/(2)$ has a minimal invariant of degree 3.

\begin{Example}[Example 3.1 in \cite{kirk}]
Consider the permutation representation of $G = \ZZ/ (2)$ on $\KK\langle x,y\rangle/(xy+yx)$. This means that the generator $g$ of $G$ acts by swapping the variables $x$ and $y$. We have a linear invariant $f_1 = x+y$. However, the quadratic invariant $f_2 = x^2 +y^2$ is not a minimal invariant as
\[ f_1^2 = (x+y)^2 = x^2 + xy + yx + y^2 = x^2 + y^2 = f_2, \]
by the defining equations of the $(-1)$-skew polynomial ring $F$. The next minimal invariant is the cubic invariant $f_3 = x^3 +y^3$. One can show that the set $\{ f_1, f_3 \}$ is a set of minimal generating invariants. Thus, for this representation $V$ of $G$ we have that $\beta_V(G) = 3 > |G| = 2$.
\end{Example}

Consider the  ideal of squares $I = (x_i^2  \, , \, 1\leq i \leq n)$. One can notice that the exterior algebra is a quotient of the ring $F$ and more precisely, $E = F/I$. Our results on invariant skew polynomials in $E$ does has consequences for invariant skew polynomials in $F$.

\begin{Corollary}
Suppose that $I = (x_i^2  \, , \, 1\leq i \leq n)$ is a $G$-stable ideal. If $f$ is an invariant skew polynomial for the action of $G$ on $F$ and degree $\deg(f) > |G|$, then $f$ is not square-free.
\end{Corollary}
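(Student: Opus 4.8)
The plan is to transport Noether's bound for the exterior algebra (Theorem~\ref{nbe}) across the quotient $\pi\colon F\twoheadrightarrow E=F/I$. First I would record the two structural facts that make this quotient usable. Since $I=(x_1^2,\dots,x_n^2)$ is assumed $G$-stable, $\pi$ is a surjective $G$-equivariant algebra homomorphism; this is exactly where the hypothesis is needed. Moreover each $x_i^2$ is central in $F$ (the relation $x_ix_j=-x_jx_i$ forces $x_i^2x_j=x_jx_i^2$), so $I$ is spanned as a $\KK$-vector space precisely by the non-square-free monomials. Writing $F_{\mathrm{sf}}$ for the span of the square-free monomials, this gives a vector-space decomposition $F=F_{\mathrm{sf}}\oplus I$ in which $\pi$ restricts to a degree-preserving linear isomorphism $F_{\mathrm{sf}}\xrightarrow{\sim}E$. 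Consequently ``$f$ is square-free'' means exactly $f\in F_{\mathrm{sf}}$, equivalently $\pi(f)$ has the same monomial support and the same degree as $f$; in particular $\pi(f)\neq 0$ and $\deg\pi(f)=\deg f$ whenever $f$ is square-free.

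I would then argue by contrapositive: assume $f\in F^G$ is square-free and show $\deg f\le|G|$. Applying $\pi$, we obtain $\pi(f)\in E^G$, a nonzero invariant skew polynomial of the same degree. By Theorem~\ref{nbe}, the invariant ring $E^G=\we(V^*)^G$ is generated in degree at most $|G|$, so any element of $E^G$ of degree exceeding $|G|$ is decomposable, i.e.\ lies in $E^G_+\cdot E^G_+$ and can be written as a sum of products of positive-degree invariants each of degree strictly less than $\deg f$.

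The hard part is to convert this decomposability statement in $E$ back into information about $f$ in $F$. Since $G$ is finite in characteristic zero it is linearly reductive, so taking $G$-invariants is exact and $\pi\colon F^G\to E^G$ is surjective; this lets me lift any factorization $\pi(f)=\sum_k u_k\wedge v_k$ (with $u_k,v_k\in E^G_+$) to invariants $\tilde u_k,\tilde v_k\in F^G$ of the same degrees, yielding $\tilde f:=\sum_k\tilde u_k\,\tilde v_k\in F^G$ with $\pi(\tilde f)=\pi(f)$ and hence $f-\tilde f\in I$. The main obstacle is precisely the interaction between invariance and the splitting $F=F_{\mathrm{sf}}\oplus I$: this splitting is \emph{not} $G$-equivariant (the linear action of $G$ mixes square-free monomials into squares), so passing the degree bound through $\pi$ is not automatic. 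The crux of the proof is therefore to control the square-free component of the lift $\tilde f$ and show that the presence of a square-free invariant of degree $>|G|$ is incompatible with writing its image as a product of strictly-lower-degree invariants, using the centrality of the $x_i^2$ together with the Reynolds operator $\cR$ on $F$ to track which monomials can survive in $F_{\mathrm{sf}}$. I expect essentially all of the work to be concentrated in this final comparison step; the reductions in the first two paragraphs are formal.
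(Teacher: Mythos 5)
Your proposal has a genuine gap, and it sits exactly where you yourself place ``essentially all of the work'': the final comparison step is never carried out, and in fact it cannot be, because the contrapositive you set out to prove --- \emph{every} square-free invariant of $F$ has degree at most $|G|$ --- is false. Theorem \ref{nbe} bounds the degrees of a \emph{generating set} of the invariant ring, not of all invariants, and high-degree square-free invariants always exist once there are enough variables: for the trivial group, $x_1x_2$ is a square-free invariant of degree $2>1=|G|$; for $G=\ZZ/(2)$ acting on $F$ in six variables $x_1,y_1,x_2,y_2,x_3,y_3$ by swapping $x_i\leftrightarrow y_i$ (so that $I$ is visibly $G$-stable, since $g\cdot x_i^2=y_i^2\in I$), the product $(x_1+y_1)(x_2+y_2)(x_3+y_3)$ is a square-free invariant of degree $3>2=|G|$. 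The statement is meaningful only if ``invariant'' is read as \emph{minimal} invariant, i.e., a member of a minimal homogeneous generating set of $F^G$ --- the reading forced by the surrounding discussion of Example 3.1 of \cite{kirk}, whose whole point is a minimal invariant of degree $3$. Your (correct) observation that Theorem \ref{nbe} yields only decomposability of $\pi(f)$ is the symptom of this: for arbitrary invariants the desired degree bound simply is not there to be proved.

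The parts you do carry out (the reduction through $\pi$, nonvanishing and degree preservation for square-free elements, equivariance from $G$-stability of $I$) coincide with the paper's opening moves, but the paper's proof then diverges from your plan: it never lifts anything back to $F$. It argues by contradiction entirely inside $E$: square-freeness gives $\overline{f}=f+I\neq 0$ with $\deg\overline{f}=\deg f$; $G$-stability gives $\overline{f}\in E^G$; and the bound of Theorem \ref{nbe}, applied to $\overline{f}$ as a minimal invariant, forces $\deg f\leq |G|$. Be aware, too, that your lift-and-correct scheme cannot be repaired even under the minimal-invariant reading: reductivity does give $f=\sum_k u_kv_k+h$ with $u_k,v_k\in F^G_+$ and $h\in I\cap F^G$, but the error term can itself be an indecomposable invariant --- in Example 3.1 of \cite{kirk} the minimal invariant $x^3+y^3=x\cdot x^2+y\cdot y^2$ lies in $I$ --- so decomposability of $\pi(f)$ never transfers to $f$ along that route. (Transferring minimality from $f$ to $\overline{f}$ genuinely requires using square-freeness of $f$; the paper's two-line proof leaves that point implicit, but whatever supplies it, it is not the lifting argument you sketch.)
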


\begin{proof}
Towards contradiction, suppose that $f$ is a square-free skew invariant polynomial of degree $\deg(f) > |G|$. Consider $\overline{f} = f + I $ in $E=F/I$. Notice that $f \notin I$, because $f$ is square-free, so $\overline{f} \neq 0 $ and $\deg(\overline{f}) = \deg(f) > |G|$. Because $I$ is $G$-stable, we have that $\overline{f}$ is an invariant skew polynomial over $E$. By \ref{nbe}  $ \deg(f) =\deg(\overline{f})  \leq |G|$, contradicting the initial assumption.
\end{proof}

The exterior algebra case is special among non-commutative algebras. On the other hand, it also has features different from the ones of the symmetric algebra. In characteristic zero, given a finite group $G$ acting on an $n$-dimensional vector space $V$, the associated action on the polynomial ring $\R$ is such that $\beta_{V}(G) \leq \beta_{V_{\reg}}(G)$, where $V_{\reg}$ is the regular representation of $G$ \cite{schmid}. One says that the degree bound is achieved by the regular representation. However, this behavior does not carry to $\E$.

\begin{Example}
Consider the group $G = \ZZ/(2)$ and let $g$ generate $G$. Consider the action of $g$ on $\KK[x,y]$ given by $g \cdot x = x$ and $g \cdot y =  -y$. The polynomial ring $\KK[x,y]$ with this action is equivalent to the regular representation of $G$. In fact, we have a degree two invariant, $y^2$, which achieves Noether's Degree Bound so that $\beta(G)=2$.

Consider now the same action on the variables $x,y$, but in the exterior algebra $\we(x,y)$. We only have linear invariants, specifically the non-zero constant multiples of $x$. If the regular representation did achieve the Degree Bound, we would have that $\beta(G) \leq 1$. However, consider now two copies of the same representation. The action of $g$ on the variables $x_1,y_1,x_2,y_2$ is given by $g \cdot x_i = x_i$ and $g \cdot y_i = -y_i$, for $i =1,2$. In $\we(x_1,x_2,y_1,y_2)$ we now have a quadratic invariant: $y_1 \wedge y_2$. Thus, Noether's Degree Bound is achieved and we can conclude that $\beta(G)=2$.
\end{Example}

\section{Bound transference and Weyl's Polarization Theorem}

Another result in classical invariant theory that does not hold over $\E$ is Weyl's Polarization Theorem. Classical references on this result are \cites{w} and \cites{kp}.

\begin{Theorem}[Weyl's Polarization Theorem - weak form  \cite{dm}]
Assume that the characteristic of the ground field is zero and let $m = \dim W$. Then for any finite group $G$ acting on $W$, we have that $\beta_{W^m}(G) \geq \beta_{W^b}(G)$ for any other $b$.
\end{Theorem}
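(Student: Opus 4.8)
\section*{Proof proposal}

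The plan is to prove the bound using the two classical degree-preserving operations of invariant theory: \emph{restitution} (restriction) for the range $b \le m$, and \emph{polarization} for $b > m$. Throughout, $G$ acts diagonally on $W^b = W^{\oplus b}$, and since $\cha \KK = 0$ and $G$ is finite, $G$ is linearly reductive, so we may use the Reynolds operator $\cR$ on each coordinate ring $\KK[W^b]$.

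First I would dispose of the case $b \le m$. The inclusion $W^b \into W^m$ as the first $b$ copies is $G$-equivariant, so it induces a $G$-equivariant graded surjection of coordinate rings $\KK[W^m] \onto \KK[W^b]$, namely the substitution setting the variables of the last $m - b$ copies to zero. This map is degree non-increasing and $G$-equivariant, so applying $\cR$ shows that the induced map on invariants $\KK[W^m]^G \onto \KK[W^b]^G$ is again surjective. Hence a generating set of $\KK[W^m]^G$ in degrees $\le \beta_{W^m}(G)$ maps onto a generating set of $\KK[W^b]^G$ without raising degrees, giving $\beta_{W^b}(G) \le \beta_{W^m}(G)$.

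The substantive case is $b > m$, where Weyl's polarization theorem enters. Writing the coordinates of the $k$-th copy as $x_1^{(k)}, \dots, x_m^{(k)}$, the polarization operators
\[ D_{kl} = \sum_{i=1}^m x_i^{(k)} \frac{\partial}{\partial x_i^{(l)}}, \qquad 1 \le k, l \le b, \]
commute with the diagonal $G$-action (since $G$ acts identically on every copy) and are homogeneous of total degree $0$: differentiation by $x_i^{(l)}$ lowers degree by one while multiplication by $x_i^{(k)}$ raises it by one. The content of the theorem is that, because $m = \dim W$, the ring $\KK[W^b]^G$ is generated as a $\KK$-algebra by the iterated polarizations $D_{k_1 l_1} \cdots D_{k_s l_s}(f)$ of invariants $f \in \KK[W^m]^G$. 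Since each $D_{kl}$ preserves total degree, polarizing a generating set of $\KK[W^m]^G$ of degrees $\le \beta_{W^m}(G)$ yields a generating set of $\KK[W^b]^G$ in the same degrees, so $\beta_{W^b}(G) \le \beta_{W^m}(G)$.

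I expect the main obstacle to lie in the case $b > m$: justifying---or precisely citing, following \cite{dm}---that $m = \dim W$ copies already suffice, i.e.\ that polarizations of the $m$-copy invariants generate the $b$-copy invariants for every $b > m$. This is exactly where characteristic zero is essential and where the representation theory of $\GL_b$ (through the interplay of polarization and restitution, bounding the number of necessary copies by $\dim W$) does the real work. Once that generation statement is granted, the degree comparison is immediate, since neither polarization nor restitution ever raises the degree of an invariant.
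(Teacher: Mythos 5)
Your proposal addresses a statement the paper does not actually prove: the weak form is quoted verbatim from Derksen--Makam \cite{dm}, and the paper only sketches, in the surrounding discussion, why the mechanism works over $\sym$ and breaks over $\we$. Your route is the classical one and is correct in outline: restitution disposes of $b \le m$ (your Reynolds-operator argument for surjectivity on invariants works; even more simply, an invariant on $W^b$ pulls back along the $G$-equivariant projection $W^m \onto W^b$ to an invariant restricting to it), and for $b > m$ you defer to the generation-by-polarizations statement, which is exactly the strong form of the theorem, stated later in the paper and also cited from \cite{dm}. The paper's implicit argument is instead representation-theoretic: decompose $\sym(W \otimes \KK^b) = \bigoplus_\lambda \Sl(W) \otimes \Sl(\KK^b)$ by Cauchy's formula, note that $\Sl(W) \neq 0$ forces $l(\lambda) \leq m = \dim W$, so every isotypic component occurring for $b$ copies already occurs for $m$ copies, and conclude by Schur's lemma that the $\GL_b$-span of the $m$-copy invariants fills each component. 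In characteristic zero the two presentations are equivalent (your operators $D_{kl}$ span the $\mathfrak{gl}_b$-action, whose orbit of a subspace spans the same space as the $\GL_b$-orbit), but the Schur-functor formulation is the one the paper needs downstream: it is precisely the asymmetry $\Sl(W) \otimes \Slp(\KK^b)$ in the exterior Cauchy formula that makes the theorem fail over $\we$, which your differential-operator formulation does not make visible.

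One step in your $b > m$ case needs patching. The theorem you invoke generates $\KK[W^b]^G$ from polarizations of \emph{all} of $\KK[W^m]^G$, but you apply it to polarizations of a chosen low-degree \emph{generating set}. The bridge is that each $D_{kl}$ is a derivation: by the Leibniz rule, an iterated polarization of a product lies in the algebra generated by iterated polarizations of the factors, so polarizing a generating set suffices. Alternatively, cite the strong form as stated in the paper, which is already phrased for a generating set $S$ and its span $\langle S \rangle_{\GL_r}$; since the $\GL_r$-action is linear in the variables it preserves degree, and the degree bound $\beta_{W^b}(G) \le \beta_{W^m}(G)$ follows at once.
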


However, this result hold in the exterior algebra. In fact, it is not true that the highest degree minimal invariants appear in $\we(W^m) = \we(W \otimes \KK^m)$, the exterior algebra over the direct sum of $m$ copies of the representation $W$. We illustrate this in the following example. 

\begin{Example}
Let $W$ be the one-dimensional representation of $G = \ZZ/(2) = \langle g \rangle$ given by $g \cdot x = -x$. Then the only invariants in $\we(x)$ are scalars. On the other hand, consider the representation $W^2$, where $g \cdot x_i = x_i$, for $i=1,2$. We do have maximal degree invariants now: we have the quadratic invariant $x_1 \wedge x_2$. 
\end{Example}

Over $\sym$, Weyl's Polarization Theorem works because of the decomposition
\[\sym(W\otimes \KK^n) = \bigoplus \Sl(W) \otimes \Sl(\KK^n). \]


For any $W$ such that $m = \dim(W) \geq l(\lambda)$, we have that $\Sl(W) \neq 0$.
Similarly, when $n \geq l(\lambda)$, we have that we have that $\Sl(\KK^n) \neq 0$. Assuming that $n \geq m$, the decomposition becomes
\[\sym(W\otimes \KK^n) = \bigoplus_{\lambda,  m \geq l(\lambda)   } \Sl(W) \otimes \Sl(\KK^n). \]
From the inclusion map $\KK^{n-1} \into \KK^n$, we get the map
\[ \phi: \sym(W\otimes \KK^{n-1}) \into \sym(W\otimes \KK^n).\]
Considering $W$ as a representation of $G$ and letting $G$ act trivially in $\KK^n$ we get the diagram
\begin{center}

\begin{tikzcd}
 \sym(W\otimes \KK^n)^G \arrow[r, "\cong"] & \bigoplus_{\lambda,  l(\lambda) \leq m } \Sl(W)^G \otimes \Sl(\KK^n) \\
  \sym(W\otimes \KK^{n-1})^G \arrow[r, "\cong"] \arrow[u,"\phi"] & \bigoplus_{\lambda, l(\lambda) \leq m  } \Sl(W)^G \otimes \Sl(\KK^{n-1}) \arrow[u, "\phi"] 
\end{tikzcd} 
    
\end{center}
As an equivariant map $\phi$ maps the component indexed by $\lambda$ into the component indexed by $\la$. So as long as the image of $\phi$ on a $\la$-component is non-zero, by Schur's Lemma the image is actually the whole $\la$-component in the target space.

On the other hand, for $\we$ we have the decomposition 
\[\we(W\otimes \KK^n) = \bigoplus \Sl(W) \otimes \Slp(\KK^n). \]
with the presence of partitions $\la$ and $\la'$.  In particular, a map 
\[ \Sl(W) \otimes \Slp(\KK^{n-1}) \to \Sl(W) \otimes \Slp(\KK^{n})\] 
might be zero because $m < |\la| $ or $n-1 < |\la'|$. But a restriction on the size of $\la'$ does put any restriction on the size of $\la$ (only on its first row). The asymmetry makes it impossible to simultaneously characterize which components are non-zero.

Even though Weyl's Polarization Theorem does not hold over the exterior algebra, we can still find a bound on the number of copies of a vector space required to achieve maximal degree invariants. We will use the transpose functor $\W$ to transfer bounds from the symmetric algebra to the exterior algebra.

Another way to say that there is an upper bound $d$ on the degree of the minimal invariants over the polynomial algebra $\sym$ is to say that the map $m$ 
\[m: \bigoplus_{e=1}^{d-1} \sym_e^G \otimes \sym_{d-e}^G \to \sym_d^G \]
is surjective. When $m$ is surjective, there are no minimal invariants in degree $d$ because all invariants in degree $d$ can be obtained polynomials in smaller degree invariants. 

Now consider the action of the finite group $G$ on a vector space $W$. We consider the polynomial functors $\Sl(\blank \otimes W)$ that for any vector space $V$, maps $ V \to \Sl(V \otimes W)$. As in ~\cites{thesis,article}, we define $\W(\Sl)$ to be the direct limit of $ V \to \Slp(V \otimes W)$, as $\dim(V) \to \infty$, which is naturally equivalent to $\Slp(\blank \otimes W)$. Notice that because $G$ acts on the fixed vector spaces $W$, the group $G$ acts trivially on $\Sl(V)$, for any $V$, and thus the action of $G$ commutes with $\W$, as in the diagram below.
\begin{center}
  \begin{tikzcd}
 \sym(\blank \otimes W) \arrow[r, "\W"] & \we(\blank \otimes W)   \\
  \sym(\blank \otimes W)^G  \arrow[hookrightarrow]{u} \arrow[d,"\cong"] \arrow[r, "\W"] & \we(\blank \otimes W)^G \arrow[d,"\cong"] \arrow[hookrightarrow]{u} \\
   \bigoplus \Sl(\blank) \otimes \Sl(W)^G  \arrow[r, "\W"] & \bigoplus \Slp(\blank) \otimes \Sl(W)^G 
\end{tikzcd}  
\end{center}

We now combine what we know about the map $m$ over $\sym$ with the observation about the action of $G$ to obtain the following theorem.

\begin{Theorem}
Consider the action of $G$ on the vector space $W$ and let $V$ be any vector space with a trivial action of $G$. Suppose that the maximal degree of a minimal invariant in $\sym( W\otimes V)^G$ is less than $d$, so  $\beta_{\wv}(G) < d$ for any vector space $V$. Then the maximal degree of a minimal invariant over $\we(\wv)^G$ is also less than $d$, so in symbols $\beta_{\wv}'(G) < d$.
\end{Theorem}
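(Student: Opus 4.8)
The plan is to recast both the hypothesis and the conclusion as statements in the graded functor category $\gp$ and then push them across the transpose functor $\W$. Saying that $\sym(\wv)^G$ is generated in degree $<d$ for \emph{every} $V$ is the same as saying that for each $d'\geq d$ the multiplication map of polynomial functors
\[ m_{d'}: \bigoplus_{e=1}^{d'-1}\sym_e(\blank\otimes W)^G\otimes\sym_{d'-e}(\blank\otimes W)^G\to\sym_{d'}(\blank\otimes W)^G \]
is surjective; surjectivity for all $V$ is precisely surjectivity at the level of functors, since source and target are the pointwise tensor products evaluated at $V$. By the diagram preceding the statement, $\W$ sends the invariant algebra functor $\sym(\blank\otimes W)^G\cong\bigoplus_\la\Sl(\blank)\otimes\Sl(W)^G$ to $\we(\blank\otimes W)^G\cong\bigoplus_\la\Slp(\blank)\otimes\Sl(W)^G$: the $G$-invariant multiplicity spaces $\Sl(W)^G$ are untouched, and only the $\blank$-slot changes from $\Sl$ to $\Slp$. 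So the goal is to see that $\W$ carries $m_{d'}$ to the analogous multiplication over $\we$ and preserves surjectivity.

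First I would isolate the two structural properties of $\W$ that make this work. The first is that $\W$ is \emph{exact}: it is built from the exact functor $\blank\otimes V$, the isotypic-component functor $\HVD=\Hom_{\GL(V)}(\we^d(V),\blank)$, and a filtered direct limit, all of which preserve surjections — this is already the mechanism by which $\W$ preserves regularity in \cite{article}. The second is that $\W$ is \emph{monoidal}: on simple objects, $\Sl\otimes\Sm\cong\bigoplus_\nu\Sn^{\oplus c_{\la\mu}^{\nu}}$ gives $\W(\Sl\otimes\Sm)\cong\bigoplus_\nu\Snp^{\oplus c_{\la\mu}^{\nu}}$, whereas $\W(\Sl)\otimes\W(\Sm)=\Slp\otimes\Smp\cong\bigoplus_\nu\Snp^{\oplus c_{\la'\mu'}^{\nu'}}$, and these agree by the transposition symmetry of Littlewood--Richardson coefficients $c_{\la\mu}^{\nu}=c_{\la'\mu'}^{\nu'}$ (equivalently, the $\omega$-involution $s_\la\mapsto s_{\la'}$ is a ring automorphism). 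Extending additively and checking naturality, one obtains a monoidal structure on $\W$; it carries the algebra multiplication of $\sym(\blank\otimes W)^G$ to the multiplication on $\we(\blank\otimes W)^G$, which one identifies with the skew-commutative exterior product encoded by the dual-Cauchy placement of $\Slp$ in the $\blank$-slot.

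With these facts the conclusion follows. Write $A=\sym(\blank\otimes W)^G$ and let $Q=A_+/A_+^2$ be its functor of indecomposables; the hypothesis $\beta_{\wv}(G)<d$ for all $V$ says exactly that $Q$ vanishes in every degree $\geq d$. Because $\W$ is exact and monoidal, it sends $A$ to the algebra $\W(A)=\we(\blank\otimes W)^G$ and identifies the latter's indecomposables with $\W(Q)$, using $\W(A_+^2)=\W(A)_+^2$ (monoidality) together with $\W(A_+/A_+^2)=\W(A_+)/\W(A_+^2)$ (exactness). Since $\W$ preserves the grading degree by degree and $\W(0)=0$, the functor $\W(Q)$ also vanishes in degrees $\geq d$. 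Evaluating at an arbitrary $V$ shows $\we(\wv)^G$ has no minimal generators in degree $\geq d$, that is, $\beta_{\wv}'(G)<d$ for all $V$. I expect the main obstacle to be the monoidality step: upgrading the Littlewood--Richardson matching of multiplicities into a genuinely natural, coherent monoidal structure on $\W$, and confirming that the induced product on $\we(\blank\otimes W)^G$ really is the exterior product with its Koszul signs rather than merely an abstract algebra with the correct graded dimensions.
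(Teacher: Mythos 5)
Your proposal is correct, and at its core it is the paper's own argument: both recast the hypothesis as surjectivity of the multiplication map of polynomial functors, use the decomposition $\sym(\blank\otimes W)^G\cong\bigoplus_\la\Sl(\blank)\otimes\Sl(W)^G$ in which the group survives only in the multiplicity spaces, and transfer surjectivity across $\W$. The differences are in bookkeeping, and they cut both ways. The paper applies $\W_V$ to the degree-$d$ multiplication map $m$ and then argues surjectivity of $\W_V(m)$ separately for each finite-dimensional $V$, which forces it to handle the case $\dim V<d$, where components $\Slp(V)$ vanish; your route --- establish surjectivity once at the functor level (equivalently, vanishing of the indecomposables $Q=A_+/A_+^2$ in degrees $\geq d$) and only then evaluate --- absorbs that case automatically, because evaluation is exact on the semisimple category $\gp$. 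Conversely, the monoidality of $\W$ that you single out as the main obstacle is exactly the step the paper passes over in silence: its proof simply asserts $\W(m)=m'$, i.e., that transporting the symmetric multiplication along $\W$ yields the exterior multiplication. In the paper's framework this is supplied by the construction of $\W$ in \cite{article}, where $\W$ is built to carry the algebra functor $\sym(W\otimes\blank)$ and its module functors to their exterior counterparts; your Littlewood--Richardson symmetry $c_{\la\mu}^{\nu}=c_{\la'\mu'}^{\nu'}$ is precisely the object-level check, and the naturality and sign issues you worry about are where the content of that construction sits. So neither write-up is self-contained on this point; yours has the merit of naming the dependency explicitly, and it closes completely once you invoke the algebra-compatibility of $\W$ from \cite{article} rather than reproving it.
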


\begin{proof}
Because $\beta_{\wv}(G) < d$ for any vector space $V$, we have that the map $m$ 
\[m: \bigoplus_{e=1}^{d-1} \sym(\wv)_e^G \otimes \sym(\wv)_{d-e}^G \to \sym(\wv)_d^G \]
is surjective. 
Notice that for any integer $a$, we have that 
\[\sym(\wv)_a^G = \bigoplus_{\lambda \vdash a} \Sl(V)^G \otimes \Sl(W)^G =  \bigoplus_{\lambda \vdash a} \Sl(V) \otimes \Sl(W)^G, \] 
because $G$ acts trivially on $V$ and thus $G$ also acts trivially on $\Sl(V)$. Using this observation for the map $m$ we obtain that
\[m: \bigoplus_{e=1}^{d-1} \left(\bigoplus_{\lambda \vdash e} \Sl(V) \otimes \Sl(W)^G \right) \otimes \left(\bigoplus_{\mu \vdash d-e} \Sm(V) \otimes \Sm(W)^G \right) \to \bigoplus_{\nu \vdash d} \Sn(V) \otimes \Sn(W)^G. \]
Applying the functor $\W_V$ to this map we get, for any vector space $V$, the map
\[\W_V(m): \bigoplus_{e=1}^{d-1} \left(\bigoplus_{\lambda \vdash e} \Slp(V) \otimes \Sl(W)^G \right) \otimes \left(\bigoplus_{\mu \vdash d-e} \Smp(V) \otimes \Sm(W)^G \right) \to \bigoplus_{\nu \vdash d} \Snp(V) \otimes \Sn(W)^G. \]
Considering the $\lambda'$-component, an equivariant map is nonzero only when $\lambda' \subset \nu'$. So it is impossible for $\Snp(V) \neq 0$, but $\Slp(V) = 0$. So for $\dim(V) < d$, the map $\W_V(m)$ might be the zero map, but it is nonetheless surjective because when $\Slp(V) = 0$ we also have $\Snp(V) = 0$. 

Because we have a surjective map for any $V$, the map $\W(m) = m'$ is surjective. Therefore, 
\[m': \bigoplus_{e=1}^{d-1} \we(\wv)_e^G \otimes \we(\wv)_{d-e}^G \to \we(\wv)_d^G \]
is surjective and so $\beta_{\wv}'(G) < d$.
\end{proof}

The above results establishes a similar result to Weyl's Polarization Theorem, but with a new bound: instead of using the dimension of the representation $W$, we use the order of the group. Next, we will consider a the stronger version of Weyl's Polarization Theorem and prove a corollary with the same flavor in this new context.

Let $E$ be a $\GL(V)$ representation. For any subset $ S \subseteq E$, we define $\langle S \rangle_{\GL(V)}$ to be the smallest $\GL(V)$-stable subspace containing $S$. For $r \geq s$, the inclusion $W^s \subseteq W^r$ induces an inclusion $\GL_s \into \GL_r$ given by
\[A \mapsto \begin{bmatrix} A & 0 \\
0 & I_{r-s} 
\end{bmatrix} := \widetilde{A}. \]
The action of $\GL_r$ on a $\GL_s$-module via this inclusion is referred to as the operation of polarization. Concretely, for $S \subseteq E$ and $E$ a $GL_s$-module, we have that 
\[\langle S \rangle_{\GL_r} = \left\{ \sum_{f\in S \, , \,  A \in \GL_r} \widetilde{A} \cdot f  \right\}. \] 
Weyl's Polarization Theorem is rephrased as follows in terms of this construction.

\begin{Theorem}[Weyl's Polarization Theorem - strong form  \cite{dm}]\label{wps}
Assume that the characteristic of the ground field $\KK$ is zero and let $m = \dim W$.  Let $r \geq s \geq m$ and let $S \subset \KK[W^s]^G$ be a generating set for $\KK[W^s]^G$. Then the set
$\langle S \rangle_{\GL_r} $ is a generating set for $\KK[W^r]^G$.
\end{Theorem}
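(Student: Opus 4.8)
The plan is to run the entire argument through the $G$-invariant Cauchy decomposition already used in the previous section, reducing the statement to a purely representation-theoretic claim about $\GL_r$-orbits of Schur components. Write $B^{(r)} = \KK[W^r]^G = \sym(W\otimes\KK^r)^G$ and $B^{(s)} = \KK[W^s]^G = \sym(W\otimes\KK^s)^G$. Taking $G$-invariants of the Cauchy decomposition (with $G$ acting only on the $W$-factor) gives $B^{(r)}\cong\bigoplus_\lambda M_\lambda\otimes\Sl(\KK^r)$ and $B^{(s)}\cong\bigoplus_\lambda M_\lambda\otimes\Sl(\KK^s)$, where $\GL_r$ (resp. $\GL_s$) acts only on the second factor and the multiplicity space $M_\lambda := \Sl(W)^G$ is the same in both and is nonzero only when $l(\lambda)\leq m$. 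Since $s\geq m$, every $\lambda$ contributing to $B^{(r)}$ satisfies $l(\lambda)\leq m\leq s$, so $\Sl(\KK^s)\neq 0$ and the same $\lambda$ already contribute to $B^{(s)}$. The overall strategy is an induction on total degree $d$: assuming the subalgebra $A$ generated by the subspace $\langle S\rangle_{\GL_r}$ already contains $B^{(r)}_{d'}$ for all $d'<d$, I show $A_d = B^{(r)}_d$, which reduces to controlling the \emph{indecomposables} $B^{(r)}_d/P^{(r)}_d$, where $P^{(r)}_d = \sum_{0<e<d}B^{(r)}_e\cdot B^{(r)}_{d-e}$.

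Next I would set up the two comparison maps induced by $\KK^s\hookrightarrow\KK^r$: the polarization inclusion $j\colon B^{(s)}\hookrightarrow B^{(r)}$ (regarding invariants of the first $s$ copies as invariants on all $r$) and the restitution $\rho\colon B^{(r)}\twoheadrightarrow B^{(s)}$ (setting the last $r-s$ copies to zero). These are graded algebra homomorphisms with $\rho\circ j=\mathrm{id}$, acting on the Cauchy factors by $\Sl(\KK^s)\hookrightarrow\Sl(\KK^r)$ and $\Sl(\KK^r)\twoheadrightarrow\Sl(\KK^s)$ respectively. The first structural point is that, because the $\Sl(\KK^r)$ are pairwise non-isomorphic $\GL_r$-irreducibles, every graded $\GL_r$-submodule of $B^{(r)}_d$ has the form $\bigoplus_\lambda P_\lambda\otimes\Sl(\KK^r)$ for subspaces $P_\lambda\subseteq M_\lambda$; in particular $P^{(r)}_d$, which is a $\GL_r$-submodule since multiplication is $\GL_r$-equivariant, equals $\bigoplus_\lambda P^{(r)}_\lambda\otimes\Sl(\KK^r)$, and likewise $P^{(s)}_d=\bigoplus_\lambda P^{(s)}_\lambda\otimes\Sl(\KK^s)$. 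Applying $\rho$ to $P^{(r)}_d$, and using that $\rho$ carries decomposables onto decomposables and acts by surjections on each Schur factor, forces $P^{(r)}_\lambda = P^{(s)}_\lambda =: P_\lambda$ for every relevant $\lambda$; this says the decomposable part of each multiplicity space is stable once the number of copies is at least $m$.

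With the decomposables identified, the induction step becomes a short orbit argument. Since $S$ generates $B^{(s)}$ as an algebra, the graded Nakayama lemma (Lemma~\ref{gnaka}) shows the degree-$d$ components of $S$ span the indecomposable quotient $B^{(s)}_d/P^{(s)}_d\cong\bigoplus_\lambda(M_\lambda/P_\lambda)\otimes\Sl(\KK^s)$. Transporting these along $j$ into $B^{(r)}_d/P^{(r)}_d\cong\bigoplus_\lambda(M_\lambda/P_\lambda)\otimes\Sl(\KK^r)$, they span the subspace built from $\Sl(\KK^s)\subseteq\Sl(\KK^r)$; taking the $\GL_r$-span and using that $\Sl(\KK^r)$ is $\GL_r$-irreducible while $\Sl(\KK^s)$ is a nonzero subspace of it, this $\GL_r$-span fills out all of $\Sl(\KK^r)$ in each $\lambda$. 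Hence $\langle S\rangle_{\GL_r}$ surjects onto the indecomposable quotient in degree $d$, so $P^{(r)}_d + \langle S\rangle_{\GL_r} = B^{(r)}_d$; combined with $P^{(r)}_d\subseteq A_d$ this gives $A_d = B^{(r)}_d$ and closes the induction, proving $A = B^{(r)}$.

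The step I expect to be the main obstacle is the identification $P^{(r)}_\lambda = P^{(s)}_\lambda$, i.e., the stability of the decomposable subspace of each multiplicity space as the number of copies grows past $m$. Everything else is formal once the Cauchy decomposition and Schur's lemma are in place, but this stability is what genuinely uses $r,s\geq m$ together with the hypothesis that we start from a generating set for the smaller ring; I would establish it via the restitution map $\rho$ as sketched, taking care that $\rho$ sends products to products and respects the Schur grading. Throughout, characteristic zero is essential: it guarantees linear reductivity of $\GL_r$ (hence the isotypic decomposition and the fact that submodules split off complete isotypic pieces) as well as the validity of the Cauchy decomposition of $\sym$ into Schur functors.
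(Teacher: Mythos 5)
Your proposal is correct, but note that the paper does not actually prove this theorem: it is quoted from Derksen and Makam \cite{dm}, and all the paper supplies is the remark immediately following it, which isolates the key idea --- in characteristic zero, once a Schur component is nonzero for $s$ copies, its $\GL_r$-span inside the corresponding component for $r\geq s$ copies is the whole component, by irreducibility and Schur's lemma. Your argument is organized around exactly that idea, so its crux agrees with what the paper records; what you add is the complete scaffolding the paper delegates to \cite{dm}: the $G$-invariant Cauchy decomposition with multiplicity spaces $M_\lambda=\Sl(W)^G$ (nonzero only when $l(\lambda)\leq m$, which is precisely where $s\geq m$ enters), the polarization/restitution pair $j,\rho$, and the induction on degree via indecomposables. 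One genuine streamlining is available: the step you single out as the main obstacle, the stability $P^{(r)}_\lambda=P^{(s)}_\lambda$ of the decomposable multiplicity spaces, is not needed. Since $j$ is a graded algebra map, $j(P^{(s)}_d)\subseteq P^{(r)}_d$; so from $B^{(s)}_d=\spa(S_d)+P^{(s)}_d$ you get that, modulo $P^{(r)}_d$, the images of $S_d$ span $\bigoplus_\lambda \bigl(M_\lambda/P^{(r)}_\lambda\bigr)\otimes \Sl(\iota)\bigl(\Sl(\KK^s)\bigr)$, where $\iota:\KK^s\into\KK^r$ is the inclusion; each subspace $\Sl(\iota)(\Sl(\KK^s))\subseteq\Sl(\KK^r)$ is nonzero whenever $M_\lambda\neq 0$, so its $\GL_r$-span is all of $\Sl(\KK^r)$, and the induction closes with no comparison of $P^{(r)}_\lambda$ and $P^{(s)}_\lambda$. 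Your restitution proof of that stability is nonetheless correct (naturality of the Cauchy decomposition gives $\rho(P^{(r)}_\lambda\otimes\Sl(\KK^r))=P^{(r)}_\lambda\otimes\Sl(\KK^s)$ together with $\rho(P^{(r)}_d)=P^{(s)}_d$), so this is a simplification rather than a repair. A final small point: to speak of the degree-$d$ elements of $S$ you should either take $S$ homogeneous or observe that $\langle S\rangle_{\GL_r}$ is automatically a graded subspace, because the scalar matrices of $\GL_r$ act by $c^d$ in degree $d$ and $\KK$ is infinite in characteristic zero.
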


\begin{Remark}
In characteristic zero, when $\Sl(W^s)$ is non-zero and $r \geq s$, the $\GL_r$-module $\langle \Sl(W^s) \rangle_{\GL_r} \subset \Sl(W^r)$ is also non-zero. By Schur's Lemma, $\langle \Sl(W^s) \rangle_{\GL_r} = \Sl(W^r)$ because $\Sl(W^r)$ is an irreducible $\GL(W^r)$-representation. This is the key idea in the proof of \ref{wps} in \cites{dm}.
\end{Remark}

Given that Noether's Degree Bound does hold in the exterior algebra, we prove the following polarization bound for the exterior algebra.

\begin{Corollary}
Assume that the characteristic of the ground field $\KK$ is zero and let $t = |G|$.  Let $r \geq s \geq t$ and let $S \subset \we(W^s)^G$ be a generating set for $\we(W^s)^G$. Then the set
$\langle S \rangle_{\GL_r} $ is a generating set for $\we(W^r)^G$.
\end{Corollary}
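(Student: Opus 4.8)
The plan is to combine Noether's Degree Bound over the exterior algebra (Theorem~\ref{nbe}) with the $\GL_r$-equivariant decomposition $\we(W\otimes\KK^n)=\bigoplus_\lambda \Sl(W)\otimes\Slp(\KK^n)$ and the Schur's-lemma mechanism recorded in the Remark after Theorem~\ref{wps}. Write $A=\we(W^r)^G$ and let $B\subseteq A$ be the subalgebra generated by $\langle S\rangle_{\GL_r}$. Since the $\GL_r$-action on the $\KK^r$ factor commutes with the $G$-action on $W$ and preserves the grading, $\GL_r$ acts on $A$ by graded algebra automorphisms, so $B$ is a $\GL_r$-stable subalgebra and each graded piece $B_d:=B\cap A_d$ is $\GL_r$-stable. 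By Theorem~\ref{nbe}, $A$ is generated in degrees $\leq t=|G|$, so it suffices to prove $A_d\subseteq B$, i.e.\ $B_d=A_d$, for every $d\leq t$.

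First I would record the inclusion of invariant rings. The coordinate inclusion $\KK^s\hookrightarrow\KK^r$ induces an injective, $G$-equivariant, degree-preserving algebra homomorphism $\iota:\we(W^s)\hookrightarrow\we(W^r)$. Because $\iota(S)\subseteq\langle S\rangle_{\GL_r}\subseteq B$ and $S$ generates $\we(W^s)^G$ as an algebra, the whole image $\iota(\we(W^s)^G)=\KK\langle\iota(S)\rangle$ lies in the subalgebra $B$; in particular $\iota\big((\we(W^s)^G)_d\big)\subseteq B_d$ for all $d$.

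The heart of the argument is the partition-matching in degrees $d\leq t$, and this is where the asymmetry that defeats Weyl's Theorem over $\we$ must be confronted. Taking $G$-invariants in the decomposition gives $A_d=\bigoplus_{\lambda\vdash d}\Sl(W)^G\otimes\Slp(\KK^r)$, with $\GL_r$ acting only on the $\Slp(\KK^r)$ factor and $\Sl(W)^G$ serving as a trivial multiplicity space. A partition $\lambda$ contributes here only if $\Slp(\KK^r)\neq 0$, i.e.\ $l(\lambda')=\lambda_1\leq r$ — a bound on the \emph{first row} of $\lambda$, not its length, which is exactly the obstruction discussed before Theorem~\ref{wps}. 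The key observation is that the hypothesis $s\geq t$ neutralizes it on the degrees that matter: for $\lambda\vdash d$ with $d\leq t$ one has $\lambda_1\leq|\lambda|=d\leq t\leq s\leq r$, whence $l(\lambda')=\lambda_1\leq s$ and $\Slp(\KK^s)\neq 0$. Thus the same $\lambda$-component $\Sl(W)^G\otimes\Slp(\KK^s)$ already occurs in $(\we(W^s)^G)_d$, and no component of $A_d$ is lost in passing from $W^r$ to $W^s$ — even though for larger degrees it could be.

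Finally I would fill out each isotypic block by polarization, and here the decomposition does the essential bookkeeping: because $\GL_r$ acts purely on the $\KK^r$-tensor factor, the relevant module $\Slp(\KK^r)$ is a genuine irreducible $\GL_r$-representation, so Schur's lemma applies cleanly. Since $B_d$ is $\GL_r$-stable and contains $\iota\big((\we(W^s)^G)_d\big)=\bigoplus_{\lambda\vdash d}\Sl(W)^G\otimes\iota(\Slp(\KK^s))$, it contains the $\GL_r$-span of this space; as $\GL_r$ fixes the first factor, $\big\langle\Sl(W)^G\otimes\iota(\Slp(\KK^s))\big\rangle_{\GL_r}=\Sl(W)^G\otimes\langle\iota(\Slp(\KK^s))\rangle_{\GL_r}$, and by the Schur's-lemma argument of the Remark after Theorem~\ref{wps} (applied to $\lambda'$, using $\Slp(\KK^s)\neq 0$ and $r\geq s$) the nonzero $\GL_r$-submodule $\langle\iota(\Slp(\KK^s))\rangle_{\GL_r}$ of the irreducible $\Slp(\KK^r)$ equals all of $\Slp(\KK^r)$. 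Summing over $\lambda\vdash d$ gives $B_d\supseteq\bigoplus_{\lambda\vdash d}\Sl(W)^G\otimes\Slp(\KK^r)=A_d$, hence $B_d=A_d$. Combined with the degree reduction from Theorem~\ref{nbe} this yields $B=A$, so $\langle S\rangle_{\GL_r}$ generates $\we(W^r)^G$. The main obstacle is conceptual rather than computational: recognizing that Noether's bound confines the problem to partitions of size $\leq t$, on which the first-row condition $\lambda_1\leq s$ is automatic precisely because $s\geq t$, thereby circumventing the general failure of polarization over the exterior algebra.
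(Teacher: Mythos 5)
Your proof is correct and follows essentially the same route as the paper's: the decomposition $\we(W\otimes\KK^n)=\bigoplus_{\lambda}\Sl(W)\otimes\Slp(\KK^n)$, polarization filling each irreducible $\Slp(\KK^r)$-component via Schur's lemma, and Theorem~\ref{nbe} to confine the problem to degrees $d\leq t$, where the first-row condition $\lambda_1\leq s$ holds automatically. If anything, your write-up is tighter than the paper's: you treat an arbitrary generating set $S$ by working with the $\GL_r$-stable subalgebra generated by $\langle S\rangle_{\GL_r}$ (the paper implicitly swaps $S$ for a minimal generating set in degrees $\leq t$), and you state explicitly the converse partition-matching point that for $d\leq s$ no component of $\we(W^r)_d$ is absent from $\we(W^s)_d$, which the paper leaves implicit.
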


\begin{proof}
Consider $\we(W^s)_d$, the space of degree $d$ forms in $\we(W^s)$. Recall that for $V \cong \KK^s$ 
\[ \we(W^s)_d = \bigoplus_{\lambda \vdash d} \Sl(W) \otimes \Slp(V). \]
If $\Slp(V)$ is non-zero, then for $r\geq s$ and $V'\cong \KK^r$, we also have that $\Slp(V')$ is non-zero. Thus for a non-zero component $\Sl(W) \otimes \Slp(V)$ injecting into $\Sl(W) \otimes \Slp(V')$, we can conclude that $\langle \Sl(W) \otimes \Slp(V) \rangle_{\GL_r} = \Sl(W) \otimes \Slp(V')$.

Then, for $d \leq s \leq r$ such that $\we(W^s)_d$ is non-zero, we have that $ \langle \we(W^s)_d \rangle_{\GL_r} = \we(W^r)_d$ and by considering the $G$-stable component, we have that \[\langle \we(W^s)^G_d \rangle_{\GL_r} = \we(W^r)^G_d . \]

By Noether's Degree Bound for the exterior algebra, we know that a set of minimal generating invariants $S$ for $\we(W^s)^G$ can be found in degrees less than equal to $t = |G|$. Because $s \geq t$, we have that $S \subset \sum_{d=0}^s \we(W^s)_d^G $. Then for $r \geq s$, every invariant up to degree $s$ in $\we(W^r)^G$ can be obtained by acting with $\GL_r$ on $S$. However, a minimal generating set of invariants for $\we(W^r)^G$ can be found in degrees $\leq t \leq s$. Therefore, $\langle S \rangle_{\GL_r} $ is a generating set for $\we(W^r)^G$.

\end{proof}







Informally, the above result says that one can ``see" all the invariants- and in particular the highest degree minimal invariants -in $|G|$-many copies of the representation $W$. So for $r > t= |G|$, we can find all invariants for the action of $G$ on $\we(W^{r})$ by considering the action of $G$ on $\we(W^t)$ and using polarization. Therefore, we only need as many copies of $W$ as there are elements in the group $G$ to be guaranteed to observe highest degree invariants.

\begin{bibdiv}
\begin{biblist}

\bib{b}{article}{
   author={Bj\"{o}rner, Anders},
   title={Subspace arrangements},
   conference={
      title={First European Congress of Mathematics, Vol. I},
      address={Paris},
      date={1992},
   },
   book={
      series={Progr. Math.},
      volume={119},
      publisher={Birkh\"{a}user, Basel},
   },
   date={1994},
   pages={321--370},
   review={\MR{1341828}},
}
\bib{ch}{article}{
   author={Conca, Aldo},
   author={Herzog, J\"urgen},
   title={Castelnuovo-Mumford regularity of products of ideals},
   journal={Collect. Math.},
   volume={54},
   date={2003},
   number={2},
   pages={137--152},
   issn={0010-0757},
   review={\MR{1995137}},
}

\bib{inv}{article}{
   author={Derksen, Harm},
   title={Computation of invariants for reductive groups},
   journal={Adv. Math.},
   volume={141},
   date={1999},
   number={2},
   pages={366--384},
   issn={0001-8708},
   review={\MR{1671758}},
   doi={10.1006/aima.1998.1787},
}
\bib{ci}{book}{
   author={Derksen, Harm},
   author={Kemper, Gregor},
   title={Computational invariant theory},
   series={Encyclopaedia of Mathematical Sciences},
   volume={130},
   publisher={Springer, Heidelberg},
   date={2015},
   pages={xxii+366},
   isbn={978-3-662-48422-7},
   review={\MR{3445218}},
   doi={10.1007/978-3-662-48422-7},
}

\bib{dm}{article}{
	title = {Weyl’s Polarization Theorem in Positive Characteristic},
	issn = {1531-586X},
    eprint = {https://doi.org/10.1007/s00031-020-09559-3},
	doi = {10.1007/s00031-020-09559-3},
	journal = {Transformation Groups},
	author = {Derksen, Harm},
	author = {Makam, Visu},
	date = {2020},
}

\bib{ds1}{article}{
   author={Derksen, Harm},
   author={Sidman, Jessica},
   title={A sharp bound for the Castelnuovo-Mumford regularity of subspace
   arrangements},
   journal={Adv. Math.},
   volume={172},
   date={2002},
   number={2},
   pages={151--157},
   issn={0001-8708},
   review={\MR{1942401}},
   doi={10.1016/S0001-8708(02)00019-1},
}
\bib{ds2}{article}{
   author={Derksen, Harm},
   author={Sidman, Jessica},
   title={Castelnuovo-Mumford regularity by approximation},
   journal={Adv. Math.},
   volume={188},
   date={2004},
   number={1},
   pages={104--123},
   issn={0001-8708},
   review={\MR{2084776}},
   doi={10.1016/j.aim.2003.10.001},
}



\bib{fl}{article}{
   author={Fleischmann, Peter},
   title={The Noether bound in invariant theory of finite groups},
   journal={Adv. Math.},
   volume={156},
   date={2000},
   number={1},
   pages={23--32},
   issn={0001-8708},
   review={\MR{1800251}},
   doi={10.1006/aima.2000.1952},
}
\bib{fo}{article}{
   author={Fogarty, John},
   title={On Noether's bound for polynomial invariants of a finite group},
   journal={Electron. Res. Announc. Amer. Math. Soc.},
   volume={7},
   date={2001},
   pages={5--7},
   issn={1079-6762},
   review={\MR{1826990}},
   doi={10.1090/S1079-6762-01-00088-9},
}
\bib{thesis}{book}{
   author={Gandini, Francesca},
   title={Ideals of Subspace Arrangements},
   note={Thesis (Ph.D.)--University of Michigan},
   publisher={ProQuest LLC, Ann Arbor, MI},
   date={2019},
   eprint={https://deepblue.lib.umich.edu/handle/2027.42/151589},
}
\bib{article}{article}{
   author={Gandini, Francesca},
   title={Resolutions of ideals associated to subspace arrangements},
  journal={Journal of Commutative Algebra},
  note={Accepted},
   date={2021},
   eprint={https://projecteuclid.org/journals/jca/journal-of-commutative-algebra/DownloadAcceptedPapers/190722-Gandini.pdf},
}


\bib{survey}{article}{
      title={Invariant Theory of Artin-Schelter Regular Algebras: A survey}, 
      author={Ellen E Kirkman},
      date={2015},
	 journal={ArXiv e-prints},
   eprint = {https://arxiv.org/abs/1506.06121},
}

\bib{kirk}{article}{
   author={Kirkman, E.},
   author={Kuzmanovich, J.},
   author={Zhang, J. J.},
   title={Invariants of $(-1)$-skew polynomial rings under permutation
   representations},
   conference={
      title={Recent advances in representation theory, quantum groups,
      algebraic geometry, and related topics},
   },
   book={
      series={Contemp. Math.},
      volume={623},
      publisher={Amer. Math. Soc., Providence, RI},
   },
   date={2014},
   pages={155--192},
   review={\MR{3288627}},
   doi={10.1090/conm/623/12463},
}

\bib{kwz}{article}{
	title = {Degree bounds for {Hopf} actions on {Artin}-{Schelter} regular algebras},
    author = {Kirkman, Ellen and Won, Robert and Zhang, James J.},
	date = {2020},
	 journal={ArXiv e-prints},
   eprint = {https://arxiv.org/abs/2008.05047v1},
}

\bib{kp}{article}{
	title = {Classical Invariant Theory},
	author = {Kraft, Hanspeter},
	author = {Procesi, Claudio},
	pages = {128},
	date = {1996},
	eprint = {https://kraftadmin.wixsite.com/hpkraft},
}

\bib{mac}{book}{
   author={Macdonald, I. G.},
   title={Symmetric functions and Hall polynomials},
   series={Oxford Classic Texts in the Physical Sciences},
   edition={2},
   note={With contribution by A. V. Zelevinsky and a foreword by Richard
   Stanley;
   Reprint of the 2008 paperback edition [ MR1354144]},
   publisher={The Clarendon Press, Oxford University Press, New York},
   date={2015},
   pages={xii+475},
   isbn={978-0-19-873912-8},
   review={\MR{3443860}},
}

\bib{em}{article}{
   author={Noether, Emmy},
   title={Der Endlichkeitssatz der Invarianten endlicher Gruppen},
   language={German},
   journal={Math. Ann.},
   volume={77},
   date={1915},
   number={1},
   pages={89--92},
   issn={0025-5831},
   review={\MR{1511848}},
   doi={10.1007/BF01456821},
}


\bib{tca}{article}{
   author={Sam, Steven V.},
   author={Snowden, Andrew},
   title={Introduction to twisted commutative algebras},
  journal={ArXiv e-prints},
   date = {2012},
   eprint = {https://arxiv.org/abs/1209.5122},
}

\bib{sam1}{article}{
   author={Sam, Steven V.},
   author={Snowden, Andrew},
   title={GL-equivariant modules over polynomial rings in infinitely many
   variables},
   journal={Trans. Amer. Math. Soc.},
   volume={368},
   date={2016},
   number={2},
   pages={1097--1158},
   issn={0002-9947},
   review={\MR{3430359}},
   doi={10.1090/tran/6355},
}
\bib{sam2}{article}{
   author={Sam, Steven V.},
   author={Snowden, Andrew},
   title={GL-equivariant modules over polynomial rings in infinitely many
   variables. II},
   journal={Forum Math. Sigma},
   volume={7},
   date={2019},
   pages={e5, 71},
   issn={2050-5094},
   review={\MR{3922401}},
   doi={10.1017/fms.2018.27},
}

\bib{ss}{article}{
   author={Schenck, Hal},
   author={Sidman, Jessica},
   title={Commutative algebra of subspace and hyperplane arrangements},
   conference={
      title={Commutative algebra},
   },
   book={
      publisher={Springer, New York},
   },
   date={2013},
   pages={639--665},
   review={\MR{3051389}},
   doi={10.1007/978-1-4614-5292-821},
}

\bib{schmid}{article}{
   author={Schmid, Barbara J.},
   title={Finite groups and invariant theory},
   conference={
      title={Topics in invariant theory},
      address={Paris},
      date={1989/1990},
   },
   book={
      series={Lecture Notes in Math.},
      volume={1478},
      publisher={Springer, Berlin},
   },
   date={1991},
   pages={35--66},
   review={\MR{1180987}},
   doi={10.1007/BFb0083501},
}

\bib{j}{book}{
   author={Sidman, Jessica S.},
   title={On the Castelnuovo-Mumford regularity of subspace arrangements},
   note={Thesis (Ph.D.)--University of Michigan},
   publisher={ProQuest LLC, Ann Arbor, MI},
   date={2002},
   pages={45},
   isbn={978-0493-73664-8},
   review={\MR{2703588}},
}
\bib{sno}{article}{
   author={Snowden, Andrew},
   title={A remark on a conjecture of Derksen},
   journal={J. Commut. Algebra},
   volume={6},
   date={2014},
   number={1},
   pages={109--112},
   issn={1939-0807},
   review={\MR{3215564}},
   doi={10.1216/JCA-2014-6-1-109},
}

\bib{w}{book}{
	title = {The {Classical} {Groups}: {Their} {Invariants} and {Representations}},
	isbn = {978-0-691-05756-9},
    eprint = {https://www.jstor.org/stable/j.ctv3hh48t},
	publisher = {Princeton University Press},
	author = {Weyl, Hermann},
	date = {1966},
	doi = {10.2307/j.ctv3hh48t},
}
\end{biblist}
\end{bibdiv}

\end{document}